\def\NAT@def@citea{\def\@citea{\NAT@separator}}
\newcommand{\be}{\begin{equation}}
\newcommand{\ee}{\end{equation}}
\newcommand{\bea}{\begin{eqnarray}}
\newcommand{\eea}{\end{eqnarray}}
\newcommand{\ep}{\varepsilon}
\newcommand{\bst}{\boldsymbol{\xi}}
\newcommand{\cop}{{\cal COP}(p)}
\newcommand{\cp}{{\cal CP}(p)}
\theoremstyle{plain}
\newtheorem{theorem}{Theorem}[section]
\newtheorem{lemma}[theorem]{Lemma}
\newtheorem{proposition}[theorem]{Proposition}
\theoremstyle{definition}
\newtheorem{definition}[theorem]{Definition}
\theoremstyle{remark}
\newtheorem{remark}{Remark}
\begin{document}


\title{Properties of the complementarity set for the  cone of copositive matrices}

\author{
\name{O.~I. Kostyukova\textsuperscript{a}\thanks{CONTACT O.~I. Kostyukova. Email: kostyukova@im.bas-net.by}}
\affil{\textsuperscript{a}Institute of mathematics, Nationaly academy of sciences of Belarus, Minsk, Belarus}
}

\maketitle

\begin{abstract}
For a proper cone $K$ and its dual cone $K^*$ in $\mathbb R^n$, the complementarity set of $K$ is defined as
 ${\mathbb C}(K)=\{(x,y): x\in K,\; y\in K^*,\, x^\top y=0\}$. It is known that ${\mathbb C}(K)$ is an $n$-dimensional manifold in the space
$\mathbb R^{2n}$. If $ K$ is a symmetric cone, points in ${\mathbb C}(K)$  must satisfy at least $n$ linearly
 independent bi-linear identities. Since this knowledge comes in handy when optimizing over such cones,
it makes sense to search for similar relationships for non-symmetric cones.

In this paper, we  study  properties of the complementarity set for the dual cones of copositive and completely positive matrices.
Despite these cones  are of great interest  due to their applications in
 optimization, they have not yet been sufficiently studied.

\end{abstract}

\begin{keywords} Copositive matrices; completely positive matrices; strict complementarity; complementarity set
\end{keywords}

\begin{amscode} 49N15, 90C25,  90C33, 90C46
\end{amscode}

\section{Introduction}\label{S-1}

Let ${\cal K}$ be a proper cone in $ \mathbb R^n$ (that is, a closed, pointed and convex cone with nonempty interior in $ \mathbb R^n$) and
${\cal K}^*$ be its dual cone. The set
\be {\mathbb  C}({\cal K}):=\{(x, s) : x \in {\cal K},\  s\in {\cal K}^*, \ x^\top s = 0\}\label{C}\ee
is called the complementarity set of ${\cal K}$.
It follows from   the definition that
${\mathbb  C}({\cal K})$ and ${\mathbb  C}({\cal K}^*)$ are congruent: one can be obtained from the other by exchanging
the first and last $n$ coordinates.

The complementarity sets play  a very significant role
 in the context of primal and dual linear optimization
problems over a cone \cite{anjos2011handbook, letchford2018guide, pataki2000geometry, ramirez2020refining, mohammad2020parametric,
 bonnans2005perturbation, kostyukova2020immobile}
 and in complementarity problems \cite{cottle2009linear, gao2022monotone}.
The strict complementarity condition plays a central role in  establishing  error bounds and quantifying the sensitivity of the solution of conic problems
\cite{ding2023strict}.

The following theorem was proved in \cite{alizadeh1997optimization, rudolf2011bilinear}
\begin{theorem} For any proper cone ${\cal K}$  in $ \mathbb R^n$, the complementarity set ${\mathbb  C}({\cal K})$ is an
$n$-dimensional manifold.
\end{theorem}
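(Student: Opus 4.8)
The plan is to construct an explicit homeomorphism between ${\mathbb C}({\cal K})$ and $\mathbb R^n$; since $\mathbb R^n$ is an $n$-dimensional manifold, this proves the theorem (and in fact exhibits a single global chart). The tool is the classical Moreau conic decomposition: writing ${\cal K}^\circ:=\{w\in\mathbb R^n:\ w^\top x\le 0\ \ \forall\, x\in{\cal K}\}$ for the polar cone, so that ${\cal K}^\circ=-{\cal K}^*$, every $z\in\mathbb R^n$ has a \emph{unique} representation $z=u+w$ with $u\in{\cal K}$, $w\in{\cal K}^\circ$, $u^\top w=0$, and moreover $u=P_{\cal K}(z)$, $w=P_{{\cal K}^\circ}(z)$, where $P_C$ denotes Euclidean projection onto the closed convex set $C$.

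First I would introduce the linear map $F(x,s)=x-s$ restricted to ${\mathbb C}({\cal K})$, together with the candidate inverse $G:\mathbb R^n\to\mathbb R^{2n}$, $G(z)=\big(P_{\cal K}(z),\,-P_{{\cal K}^\circ}(z)\big)$. Applying Moreau's decomposition to an arbitrary $z$ shows that $G(z)\in{\mathbb C}({\cal K})$ and $F(G(z))=z$. Conversely, for $(x,s)\in{\mathbb C}({\cal K})$ the pair $(x,-s)$ is a decomposition of $z:=x-s$ of exactly the form occurring in Moreau's theorem (namely $x\in{\cal K}$, $-s\in{\cal K}^\circ$, $x^\top(-s)=0$), so uniqueness forces $x=P_{\cal K}(z)$ and $-s=P_{{\cal K}^\circ}(z)$, i.e.\ $G(F(x,s))=(x,s)$. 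Hence $F$ and $G$ are mutually inverse bijections.

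Finally I would check continuity of both maps. $F$ is continuous, being the restriction of a linear map; $G$ is continuous because each metric projection $P_{\cal K}(\cdot)$, $P_{{\cal K}^\circ}(\cdot)$ onto a closed convex cone is nonexpansive, hence continuous. Therefore $F:{\mathbb C}({\cal K})\to\mathbb R^n$ is a homeomorphism, and the claim follows. (Properness of ${\cal K}$ is not actually used in this argument; it is what makes the setting of interest, guaranteeing in particular that ${\cal K}^*$ is again a proper cone and that ${\mathbb C}({\cal K})$ carries genuine bilinear relations.)

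The one delicate point is the \emph{uniqueness} half of Moreau's decomposition, since this is precisely what upgrades the trivial surjectivity of $(x,s)\mapsto x-s$ to a bijection; it follows from the variational characterization of $u=P_{\cal K}(z)$ as the unique point of ${\cal K}$ with $(z-u)^\top(v-u)\le 0$ for all $v\in{\cal K}$, applied to the two competing decompositions. I would also stress that this argument yields only a \emph{topological} manifold structure, because $z\mapsto P_{\cal K}(z)$ generally fails to be differentiable where the active face of ${\cal K}$ changes; obtaining local smooth charts and the $n$ bilinear identities announced in the abstract requires the separate, face-by-face analysis carried out in the sequel.
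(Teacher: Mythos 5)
Your proof is correct. The paper does not prove this theorem itself but defers to the cited references, and the argument given there is exactly the one you reconstruct: the Moreau decomposition makes $(x,s)\mapsto x-s$ a global homeomorphism from ${\mathbb C}({\cal K})$ onto $\mathbb R^n$, with inverse $z\mapsto(P_{\cal K}(z),-P_{{\cal K}^\circ}(z))$. Your closing caveats — that only a topological (indeed Lipschitz) manifold structure is obtained, and that properness of ${\cal K}$ is not actually needed for this step — are both accurate.
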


In other words, there are $n$ linearly independent  functions $f_i(x, s) $ for $i = 1,...,n$ such that  in the presence of cone constraints
$ x \in {\cal K}$, $ s\in {\cal K}^*$, one complementarity  condition $x^\top s=0$ is equivalent to   $n$ conditions
$f_i(x, s) = 0 $ for $i = 1,...,n.$ These $n$ equalities are the complementary relations (or conditions).
 The $n$ functions are not unique and are characterized by the cone ${\cal K}$.

If these functions are known, then solving a conic optimization problem over the cone ${\cal K}$, we can  combine 
the complementary equations with primal and dual feasibility equations, and get a system of equations with
equal number of unknowns and equations. In the absence of various forms of degeneracy, this set of equations 
determines the primal and dual optimal solutions. Hence, the problem of explicit defining such functions is of great importance.

For some ${\cal K}\subset \mathbb R^n,$ the manifold ${\mathbb{C}}({\cal K})$  can be described
by a set of $n$  rather simple functions $f_i(x, s), $  $i = 1,...,n,$ w.r.t. $x$ and $s$, namely by $n$ bi-linear functions
$x^\top Q_is,$ $i=1,...,n,$ where $ \{Q_i,$ $i=1,...,n\}$ is a set of linearly independent $n\times n$  matrices.
 Such cones are called {\it perfect} (see \cite{gowda2014bilinearity}). This fact proves to be very useful when
optimizing over such cones.
It is known \cite{rudolf2011bilinear, gowda2014bilinearity} that a symmetric cone   ${\cal K}$   (that is, ${\cal K}$  is self-dual and its automorphism group acts
transitively on its interior) is perfect. Some generalizations of the second-order cone, which are perfect, are considered in
\cite{gao2022monotone, sznajder2016lyapunov}.

In general, for non-symmetric cones, the complementarity conditions cannot be represented only by bi-linear functions
 \cite{rudolf2011bilinear, gowda2014bilinearity}.
Hence, it is natural to  try to find other types of functions, characterizing the complementarity conditions for  non-symmetric cones.
Despite the fact that such functions  exist, constructing them explicitly is not an easy task.
This will explain the fact that to date there are few known works devoted to this problem \cite{rudolf2011bilinear}.

In this paper, we examine the complementarity conditions
for the cones $\cop$ and $\cp$  of  copositive  and
  completely positive ${p\times p}$ matrices.
The cones $\cop$ and $\cp$ are  proper cones (i.e. closed, convex, pointed, and full dimensional) and  they are dual to each other.
These cones
 are of great importance due to their applications in
optimization, especially in creating convex formulations of NP-hard problems \cite{anjos2011handbook, bomze2012copositive, bomze2012think, berman2015open}.

Despite their popularity in applications and the large number of studies devoted to these cones, they are still not well studied \cite{berman2015open}.
The cones $\cop$ and $\cp$ are   complicated,  they do not possess some ''good'' properties of the conic sets:
they are neither self-dual, nor homogeneous, nor nice, and hence not facially exposed.

Notice that both cones $\cop$ and $\cp$ have  dimension $p(p+1)/2$.
It is known \cite{gowda2014bilinearity} that there exist only $p$ linearly independent bi-linear functions $f_k(X,U),$  $X\in \cop,$ $U\in \cp,$ $k=1,...,p$,
which are sequences of the complementary condition ${\rm trace}(XU)=0.$
Hence, for these cones,  the complementarity system can never be written as a square system by means of bi-linear
functions alone, and it is necessary to look for other types of functions.

In the paper for the cone ${\cal K}\subset \mathbb R^{n\times n}$ under consideration, using a pair $(x^0,s^0)\in {\mathbb C}({\cal K}), $
we introduce a set of $m$ bi-linear functions  $\Omega_i(x,w)$, $i=1,...,m$, and $n$ linear functions $s=Lw$ w.r.t.
 extended set of variables $x\in \mathbb R^n,$ $s\in \mathbb R^n,$  $w\in \mathbb R^m,$ that satisfy the condition
$${\rm rank}\begin{pmatrix}
\frac{\partial \Omega_i(x^0,w^0)}{\partial (x,w)}
\end{pmatrix}=m \mbox{ where }w^0=Ls^0,$$
and allow us to completely describe the set ${\mathbb C}({\cal K})$ in a neighborhood of a point $(x^0,s^0)$ under some non-degeneracy assumptions.

The paper is structured as follows. Sections \ref{S-1} and \ref{S-2} contain an introduction,  and basic notations and problem statement.
    In Section \ref{S-3}, some auxiliary results are formulated and proved.
 In Section \ref{S-4}, we study  properties of the complementarity set for the cone $\cop$, and based on these properties we
 obtain a system of equations called defining equations. This system allows us to describe the complementarity set
in a neighborhood  of a given point $(X^0,U^0)\in \mathbb C(\cop)$ under some  assumptions.
In Section \ref{S-4}, we show that under made assumptions, the defining equations are independent
in a neighborhood  of the point $(X^0,U^0).$ Section \ref{S-5}  contains some examples that
 show that  assumptions made in the paper  are essential for our study.  Technical propositions are proved in appendix.

\section{Problem statement}\label{S-2}
Let $p$ be a positive integer and $P=\{1,...,p\}$. We denote by $\mathbb R^p$ the $p$-dimensional Euclidean vector space with standard orthogonal basis $\{e_k, k=1,...,p\}$.
We use $ \mathbb R^p_+$ to denote
the set of element-wise non-negative $p$-vectors.
 For $t=(t_k,k\in P)^\top\in \mathbb R^p,$ we denote its support as by ${\rm supp}(t):=\{k\in P:t_k\not=0\}$ and use the norm
$||t||_1=\sum\limits_{k=1}^p|t_k|.$

We let $\mathbb S(p)$  denote the real linear space of
 symmetric  $ p\times p$ matrices. For $U\in \mathbb S(p)$ and $W\in \mathbb S(p)$,  an inner product  is defined by $U\bullet W=
{\rm trace }(UW)$.    For a set ${\cal L}\subset \mathbb S(p)$, we denote by ${\rm int}\, {\cal L},$ ${\rm ri}\, {\cal L}$, ${\rm conv }\, {\cal L}$,
 ${\rm cone }\, {\cal L}$
the interior, relative interior, convex hull, conic hull  of ${\cal L}$, respectively,  and by  ${\rm span}\, {\cal L}$  the space spanned by ${\cal L}$ and
 by ${\cal L}^\bot$ the orthogonal complement of its
span.
 We use  $\mathbb O_{n\times p}$, $E(p)$,  and  ${\bf{0}}$   to denote  the $n\times p$ matrix  of all zeroes, identity  $p\times p$ matrix, and a null  vector.
 The  dimension    of the vector  will be clear from the context.

For a  matrix $F\in \mathbb  S(p)$, we denote by  $F_{ij},$ $i\in P,$ $j\in P,$ its elements and  define the vector
${\rm svec}(F)\in \mathbb R^{p(p+1)/2}$ by the rule (see \cite{alizadeh1998primal})
$$ {\rm svec}(F)=(F_{11},\sqrt{2}F_{21}, ... ,\sqrt{2}F_{p1}, F_{22},\sqrt{2}F_{32}, ...,\sqrt{2}F_{p2}, ... , F_{pp})^\top.$$
For a set of symmetric $p\times p $ matrices $F(i),$ $i \in I,$ we consider that
$ {\rm rank }(F(i),\, i \in I):=$ ${\rm rank }({\rm svec}(F(i)),\, i \in I).$

For a cone ${\cal K}\subset \mathbb  S(p)$, we denote by ${\cal K}^*$ its dual cone
$${\cal K}^*:=\{U\in \mathbb  S(p): U\bullet X\geq 0\ \forall X\in {\cal K}\}.$$

 We use $ \mathbb  S_+(p)$ to denote the cone of   symmetric semidefinite $ p\times p$ matrices and
$\cop$ to denote the cone of copositive $ p\times p$ matrices
$$\cop:=\{D\in \mathbb  S(p):t^\top Dt\geq 0\ \forall t \in \mathbb R^p_+\}.$$
It is easy to see that the cone $\cop$ can be equivalently  defined as follows:
$$\cop:=\{D\in \mathbb  S(p):t^\top Dt\geq 0\ \forall t \in T\} \mbox{ where } T:=\{t \in \mathbb R^p_+:||t||_1=1\}.$$

 The cone $\cop$  is not self-dual,
its dual cone is the cone of completely positive $ p\times p$  matrices
$$\cp:= (\cop)^*={\rm cone }\{t\,t^\top : t \in \mathbb R^p_+\}.$$

We say that   matrices $X\in \cop$ and $U\in \cp$ are  complementary if $X\bullet U=0.$

\begin{definition} (see \cite{pataki2000geometry}) Let  $X^0\in \cop$ and $U^0\in \cp$, we say that $X^0$ is strictly complementary to $U^0$ if
\be X^0\in {\rm ri}(\cop\cap {U^0}^\perp),\label{X-strict}\ee
 $U^0$ is strictly complementary to $X^0$ if
\be U^0\in {\rm ri}(\cp\cap {X^0}^\perp). \label{U-strict}\ee
\end{definition}

For the cone $\cop$ consider the complementarity set
\bea &\mathbb C({\cop}):=\{(X,U)\in {\cop}\times ({\cop})^*:X\bullet U=0\}=\nonumber\\
&\{(X,U):X\in {\cop},\,U\in  {\cp},\,X\bullet U=0\}.\nonumber\eea

The purpose of this paper is to obtain explicit relations in the form of equations
 that allow us to characterize the complementarity  set $\mathbb C({\cop})$ in the neighborhood of a given point
$(X^0,U^0)\in \mathbb C({\cop})$ satisfying some conditions.

\section{Auxiliary constructions and results}\label{S-3}
For a given matrix $X^0\in\cop$, denote by $T_a(X^0)$ the set of its normalized zeroes
$$T_a(X^0):=\{t \in T:t^\top X^0t=0\}.$$
The set $T_a(X^0)$ is empty or a union of a finite number of convex bounded polyhedra \cite{kostyukova2022equivalent}.

Suppose that $T_a(X^0)\not=\emptyset.$ Denote  by $\tau(j), j \in J\subset \mathbb N,$  the set of all vertices of the set ${\rm conv}\, T_a(X)$
and define the corresponding index sets
\be M(j):=\{k \in P:e^\top_kX^0\tau(j)=0\}, j \in J.\label{def-M(j)}\ee

Having $\tau(j), M(j), j \in J,$ let us partition the index set $ J $ into  maximum number of  subsets $J(s)$, $ s\in S\subset \mathbb N,$ $|S|\geq 1$, such that\\
{\bf a)} $J=\bigcup\limits_{s\in S}J(s),$  {\bf b)}  for any $s \in S,$ it holds  $\bigcup\limits_{j\in J(s)} {\rm supp}(\tau(j))\subset M(i) \;
 \forall i \in J(s),$
{\bf c)} if $|S|\geq 2,$ then for all $  s \in S,$ $\bar s \in S,$ $s\not =\bar s,$ we have
$J(s) \setminus  J(\bar s)\not=\emptyset, \ J(\bar s) \setminus J(s) \not=\!\emptyset,$
and $\ \forall    i_0\in J(s)\setminus J(\bar s),\ \exists\, j_0\in J(\bar s)\setminus J(s)\,\mbox{ and }
\exists k_0\in {\rm supp}(\tau(i_0))$  such that $ k_0 \not \in  M(j_0).$

The following proposition is proved in \cite{kostyukova2021structural}.

\begin{proposition}\label{P1}  Let $\{J(s), s \in S\}$ be the partition of the set $J$ such that the conditions {\bf a)} - {\bf c)} are satisfied.
Then the set $T_{a}(X^0)$ can be presented in the
form
\be T_{a}(X^0)=\bigcup\limits_{s \in S}T_{a}(s,X^0), \mbox{ where } T_a(s,X^0):={\rm conv} \{\tau(j), j \in J(s)\}, \ \forall s \in S.\label{part}\ee
\end{proposition}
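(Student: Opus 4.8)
\emph{Proof plan.} The plan is to recast the claim as a combinatorial fact about a ``compatibility graph'' on the vertices $\tau(j)$, $j\in J$, coming from the first-order optimality conditions for the copositive quadratic form. First I would record: if $t\in T_a(X^0)$ then $t$ minimizes $f(t):=t^\top X^0t$ over the simplex $T$ with minimum value $0$ (copositivity), and testing the feasible directions $e_k-t$ ($k\in P$) and $t-e_k$ ($k\in{\rm supp}(t)$) yields the first-order conditions $e_k^\top X^0t\ge 0$ for all $k\in P$ and $e_k^\top X^0t=0$ for all $k\in{\rm supp}(t)$; that is, ${\rm supp}(t)\subseteq\{k\in P:e_k^\top X^0t=0\}$, which for $t=\tau(j)$ is the set $M(j)$ of (\ref{def-M(j)}). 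Consequently, for $t^1,t^2\in T_a(X^0)$ we get $(t^1)^\top X^0t^2=\sum_{k\in{\rm supp}(t^1)}(t^1)_k\,e_k^\top X^0t^2\ge 0$, with equality if and only if ${\rm supp}(t^1)\subseteq\{k:e_k^\top X^0t^2=0\}$, equivalently (by symmetry of $X^0$) if and only if ${\rm supp}(t^2)\subseteq\{k:e_k^\top X^0t^1=0\}$; and since $f(\lambda t^1+(1-\lambda)t^2)=2\lambda(1-\lambda)(t^1)^\top X^0t^2$ and $T$ is convex, the segment $[t^1,t^2]$ lies in $T_a(X^0)$ exactly when $(t^1)^\top X^0t^2=0$. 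Call $i,j\in J$ \emph{compatible} when $\tau(i)^\top X^0\tau(j)=0$; this relation is symmetric, condition {\bf b)} says precisely that each $J(s)$ consists of pairwise compatible indices, and any convex combination of pairwise compatible vertices $\tau(j)$ is again a zero of $X^0$.

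Next I would prove the inclusion $\bigcup_{s\in S}T_a(s,X^0)\subseteq T_a(X^0)$. For $t=\sum_{j\in J(s)}\lambda_j\tau(j)\in T_a(s,X^0)$ one has $t\in T$ and, expanding, $t^\top X^0t=\sum_{i,j\in J(s)}\lambda_i\lambda_j\,\tau(i)^\top X^0\tau(j)=0$ by the pairwise compatibility guaranteed by {\bf b)}; hence $t\in T_a(X^0)$.

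For the reverse inclusion $T_a(X^0)\subseteq\bigcup_{s\in S}T_a(s,X^0)$, I would first note that $T_a(X^0)$ is a union of finitely many bounded polyhedra, so ${\rm conv}\,T_a(X^0)$ is a polytope and hence equals ${\rm conv}\{\tau(j):j\in J\}$, and that each vertex $\tau(j)$ --- being a vertex of the convex hull of a finite union of polytopes --- belongs to one of them and therefore to $T_a(X^0)$. Now take $t\in T_a(X^0)$ and write $t=\sum_{j\in J_t}\lambda_j\tau(j)$ with all $\lambda_j>0$ and $J_t\subseteq J$. From $0=t^\top X^0t=\sum_{i,j\in J_t}\lambda_i\lambda_j\,\tau(i)^\top X^0\tau(j)$ and the nonnegativity of the cross terms established in the first step, every pair in $J_t$ is compatible, i.e. $J_t$ is a clique of the compatibility graph $G$ on $J$. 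It then suffices to show that $\{J(s):s\in S\}$ is exactly the family of all maximal cliques of $G$: a clique $J_t$ then extends to some maximal clique $J(s)$, and $t\in{\rm conv}\{\tau(j):j\in J_t\}\subseteq{\rm conv}\{\tau(j):j\in J(s)\}=T_a(s,X^0)$.

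The remaining step --- which I expect to be the main obstacle --- is to show that a family $\{J(s):s\in S\}$ satisfying {\bf a)}--{\bf c)} with $|S|$ as large as possible is precisely the family of maximal cliques of $G$. Condition {\bf b)} makes each $J(s)$ a clique; reading {\bf c)} through the first step, for distinct $s,\bar s$ the sets $J(s),J(\bar s)$ are incomparable and no index of $J(s)\setminus J(\bar s)$ is compatible with all of $J(\bar s)\setminus J(s)$, and since every index of the clique $J(s)$ is automatically compatible with $J(s)\cap J(\bar s)$ this means $J(s)\cup J(\bar s)$ is never a clique, so no two members can be merged. One then invokes the maximality of $|S|$ to upgrade ``clique'' to ``maximal clique'' and to exclude a missing maximal clique, by showing that any family satisfying {\bf a)}--{\bf c)} in which some $J(s)$ is not maximal, or some maximal clique is absent, can be replaced by a strictly larger one still satisfying {\bf a)}--{\bf c)}. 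Carrying out this replacement cleanly is delicate because maximal cliques genuinely overlap --- so {\bf a)} is a covering, not a disjointness, condition --- and one must trade off the incomparability and non-mergeability constraints of {\bf c)} against the cardinality objective. Granted this, the two inclusions above give (\ref{part}).
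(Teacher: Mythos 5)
Your analytic groundwork is correct: the first-order conditions $e_k^\top X^0 t\ge 0$ for all $k\in P$ and $e_k^\top X^0t=0$ for $k\in{\rm supp}(t)$ when $t\in T_a(X^0)$; the resulting equivalence between $\tau(i)^\top X^0\tau(j)=0$ and ${\rm supp}(\tau(i))\subseteq M(j)$; the observation that condition {\bf b)} makes each $J(s)$ a clique of the compatibility graph $G$ (which gives the inclusion $\bigcup_s T_a(s,X^0)\subseteq T_a(X^0)$); and the reduction of the reverse inclusion to the claim that $\{J(s),\,s\in S\}$ is exactly the family of maximal cliques of $G$. Note that the paper offers no proof to compare against --- it cites \cite{kostyukova2021structural} for this proposition --- so your argument has to stand on its own.

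The one genuine gap is precisely the step you flag as the main obstacle and then assume (``Granted this\dots''); since the proposition is essentially equivalent to that combinatorial characterization, leaving it conditional leaves the proof incomplete. It is, however, closable by a short counting argument rather than the delicate exchange procedure you anticipate. First, the family of all maximal cliques of $G$ satisfies {\bf a)}--{\bf c)}: {\bf a)} holds because every singleton $\{j\}$ is a clique ($\tau(j)^\top X^0\tau(j)=0$) and extends to a maximal one; {\bf c)} holds because for distinct maximal cliques $C\neq\bar C$ neither contains the other, and any $i_0\in C\setminus\bar C$ must be incompatible with some $j_0\in\bar C$ (otherwise $\bar C\cup\{i_0\}$ would be a larger clique), and such a $j_0$ cannot lie in $C$. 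Hence the maximum value of $|S|$ is at least the number $N$ of maximal cliques. Second, in any family satisfying {\bf a)}--{\bf c)}, two distinct members $J(s),J(\bar s)$ cannot be contained in a common maximal clique $C$, since then every $i_0\in J(s)\setminus J(\bar s)$ would be compatible with every $j_0\in J(\bar s)\setminus J(s)$, contradicting {\bf c)}; so the nonempty sets of maximal cliques containing the various $J(s)$ are pairwise disjoint and $|S|\le N$. Therefore the maximum is $|S|=N$, and at the maximum each $J(s)$ is contained in exactly one maximal clique $C(s)$, with $s\mapsto C(s)$ a bijection. Finally, if some $j\in C(s_0)\setminus J(s_0)$ existed, condition {\bf a)} would place $j$ in some $J(\bar s)$ with $\bar s\neq s_0$, and applying {\bf c)} to $i_0=j\in J(\bar s)\setminus J(s_0)$ would produce a $j_0\in J(s_0)\subseteq C(s_0)$ incompatible with $j$ --- impossible, as both lie in the clique $C(s_0)$. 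Hence $J(s)=C(s)$ for all $s$, which is exactly what your argument needs; with this inserted, your proof is complete.
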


Denote
\be P_*(s)=P_*(s,X^0):=\bigcup\limits_{j\in J(s)}{\rm supp}(\tau(j))\subset P,\;  s \in S.\label{P*}\ee
It follows from condition {\bf b)} that
$$P_*(s)\subset M(j)\ \forall j \in J(s),\ \forall s\in S.$$

Consider a matrix $U^0\in \cp$ such that  $(X^0,U^0)\in \mathbb C({\cop})$.
If  $U^0\in {\cp}$ and $X^0\bullet U^0=0$, then the matrix  $U^0$ admits a presentation
$$U^0=\sum\limits_{i \in I^*}\alpha_it(i)(t(i))^\top\mbox{ with some } \alpha_i>0,\, t(i)\in T_a(X^0),\; i \in I^*.$$
Consequently, $U^0$ admits a presentation
\be U^0=\sum\limits_{s \in S}U^0(s),\ U^0(s)\in {\cal F}(s):={\rm cone}\{t\, t^\top, t \in T_a(s,X^0)\}\subset \cp\ \forall s\in S.\label{1}\ee
Notice that by construction
\be U^0_{kq}(s)=0\ \forall k \in P\setminus P_*(s),\ \forall q\in P,\ \forall s\in S.\label{1.10}\ee
Also notice that, in general, there may  exist another set of matrices $\bar U^0(s)\in {\cal F}(s), s \in S,$ such that $U^0$ admits a presentation
$ U^0=\sum\limits_{s \in S}\bar U^0(s).$

Denote
\bea&V(I)=\{(i,j): i \in I,\, j \in I,\, i\leq j\} \mbox{ for } I\subset \mathbb N,\nonumber\\
&\bst(i,j)=(\tau(i)+\tau(j)) \mbox{ for } (i,j)\in V(J(s)), \ s \in S.\nonumber\eea
For $s \in S,$ let  $J_b(s)\subset J(s)$ be such that
$${\rm rank}(\tau(j), j \in J(s))={\rm rank}(\tau(j), j \in J_b(s))=|J_b(s)|.$$
It is known (see \cite{dickinson2011geometry}) that for any $s\in S,$ the
matrices $ \bst(i,j)\bst(i,j)^\top,\  (i,j)\in V(J_b(s))$
 are linearly independent.

One can show that
\be {\rm rank}(t\,t^\top, t \in T_a(s,X^0))={\rm rank}\big(\bst(i,j)\bst(i,j)^\top,\  (i,j)\in V(J_b(s))\big)=|V(J_b(s))|,\label{rank1}\ee
and the following lemma holds true.
\begin{lemma}\label{L-1} Condition (\ref{U-strict}) is equivalent to the following one:
 matrix $U^0$ admits a presentation (\ref{1}) where matrices $U^0(s),s \in S,$ have the form
 \bea&  U^0(s)=\sum\limits_{(i,j)\in V(J(s))}\alpha_{ij}\bst(i,j)(\bst(i,j))^\top+\sum\limits_{i \in I(s)}t(i)(t(i))^\top,\label{A10}\\
&\alpha_{ij}>0, (i,j)\in V(J(s));\ t(i)\in {\rm cone} \,T_a(s,X^0), \; i \in I(s), \; s\in S. \nonumber\eea
\end{lemma}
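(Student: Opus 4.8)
The plan is to unfold the definition of strict complementarity (\ref{U-strict}) and translate it into the representation (\ref{A10}) using the rank facts already collected. Recall that $U^0 \in \cp \cap {X^0}^\perp$ forces every rank-one generator $t(i)t(i)^\top$ of $U^0$ to have $t(i) \in T_a(X^0)$, hence (via Proposition \ref{P1}) to lie in one of the pieces $T_a(s,X^0)$, which yields the decomposition $U^0 = \sum_{s \in S} U^0(s)$ with $U^0(s) \in {\cal F}(s)$. The relative interior of $\cp \cap {X^0}^\perp$ is, by the structure (\ref{part}), the (Minkowski) sum $\sum_{s\in S}{\rm ri}\,{\cal F}(s)$; so the claim reduces to characterizing ${\rm ri}\,{\cal F}(s)$ for a single $s$. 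Since ${\cal F}(s) = {\rm cone}\{t\,t^\top : t \in T_a(s,X^0)\}$ and $T_a(s,X^0) = {\rm conv}\{\tau(j), j\in J(s)\}$ is a polytope, a point of ${\cal F}(s)$ lies in its relative interior exactly when it can be written as a conic combination that uses generators spanning the full linear span of ${\cal F}(s)$, with strictly positive coefficients on a spanning subset.

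The technical core is then: identify a concrete finite set of generators of ${\cal F}(s)$ whose span equals ${\rm span}\,{\cal F}(s)$ and which can all be given strictly positive weight for an interior point. This is precisely what the matrices $\bst(i,j)\bst(i,j)^\top$, $(i,j)\in V(J_b(s))$, provide: by the cited result of \cite{dickinson2011geometry} they are linearly independent, and by (\ref{rank1}) their number $|V(J_b(s))|$ equals ${\rm rank}(t\,t^\top, t\in T_a(s,X^0)) = \dim {\rm span}\,{\cal F}(s)$, so they form a basis of that span. Moreover each $\bst(i,j)\bst(i,j)^\top = (\tau(i)+\tau(j))(\tau(i)+\tau(j))^\top$ is a genuine rank-one generator of ${\cal F}(s)$ because $\tau(i)+\tau(j)$ is a positive multiple of a point of $T_a(s,X^0)$ (the midpoint), hence lies in ${\rm cone}\,T_a(s,X^0)$. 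Thus, for the forward direction, if $U^0(s)\in{\rm ri}\,{\cal F}(s)$, I would perturb any representation of $U^0(s)$ by adding a small positive multiple of $\sum_{(i,j)\in V(J_b(s))}\bst(i,j)\bst(i,j)^\top$ (and subtracting the same interior-direction correction so as to stay at $U^0(s)$ — here one uses that an interior point absorbs a small ball in the span), which produces exactly a representation of the form (\ref{A10}) with all $\alpha_{ij}>0$ and a residual term that, being in ${\cal F}(s)$, is itself a conic combination of terms $t(i)t(i)^\top$ with $t(i)\in{\rm cone}\,T_a(s,X^0)$. For the converse, any $U^0(s)$ of the form (\ref{A10}) with all $\alpha_{ij}>0$ is a conic combination that charges a full basis of ${\rm span}\,{\cal F}(s)$ with strictly positive coefficients, hence lies in ${\rm ri}\,{\cal F}(s)$; summing over $s$ and using that ${\rm ri}$ of a sum is the sum of the ${\rm ri}$'s gives (\ref{U-strict}).

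I expect the main obstacle to be the bookkeeping that reassembles the perturbation argument into precisely the shape (\ref{A10}) — in particular, verifying that after adding the positive multiple of the $\bst(i,j)\bst(i,j)^\top$ basis one can still write the leftover as a conic (not merely affine) combination of rank-one terms $t(i)t(i)^\top$ with $t(i)\in{\rm cone}\,T_a(s,X^0)$, i.e. that the correction does not push any coefficient negative. This is handled by choosing the perturbation parameter small relative to the interior radius of $U^0(s)$ within ${\rm span}\,{\cal F}(s)$, together with Carathéodory's theorem to keep the number of extra rank-one terms finite. A secondary point requiring care is the identity ${\rm ri}\big(\sum_{s\in S}{\cal F}(s)\big) = \sum_{s\in S}{\rm ri}\,{\cal F}(s)$ and, dually, that $\cp\cap{X^0}^\perp = \sum_{s\in S}{\cal F}(s)$; the latter is essentially (\ref{1}) read as an equality of cones, which follows from the fact that every rank-one generator of any $U\in\cp\cap{X^0}^\perp$ sits in some $T_a(s,X^0)$ by Proposition \ref{P1}.
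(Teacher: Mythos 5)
The paper states Lemma \ref{L-1} without proof (it is introduced by ``One can show that \dots the following lemma holds true''), so there is no in-paper argument to compare against; judged on its own, your proposal is correct and follows the natural route. The three ingredients you use are exactly the right ones: the cone identity $\cp\cap{X^0}^\perp=\sum_{s\in S}{\cal F}(s)$ coming from Proposition \ref{P1}, the relative-interior-of-a-Minkowski-sum identity ${\rm ri}\big(\sum_{s}{\cal F}(s)\big)=\sum_{s}{\rm ri}\,{\cal F}(s)$, and the characterization of ${\rm ri}\,{\cal F}(s)$ as the set of conic combinations that put strictly positive weight on a family of generators spanning ${\rm span}\,{\cal F}(s)$, with (\ref{rank1}) and the linear independence from \cite{dickinson2011geometry} guaranteeing that the matrices $\bst(i,j)\bst(i,j)^\top$, $(i,j)\in V(J_b(s))$, form such a family.

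Two small points should be tightened. First, in the forward direction you only inject positive multiples of $\bst(i,j)\bst(i,j)^\top$ for $(i,j)\in V(J_b(s))$, which produces $\alpha_{ij}>0$ only on the basis pairs, whereas (\ref{A10}) demands $\alpha_{ij}>0$ for \emph{every} $(i,j)\in V(J(s))$; since each $\bst(i,j)\bst(i,j)^\top$ with $(i,j)\in V(J(s))$ lies in ${\cal F}(s)$ (it is $4$ times the generator attached to the midpoint $(\tau(i)+\tau(j))/2\in T_a(s,X^0)$), running the identical argument with $G=\sum_{(i,j)\in V(J(s))}\bst(i,j)\bst(i,j)^\top$ closes this gap in one line. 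Second, the ``add and subtract the same correction'' phrasing is more cleanly stated as: for $\ep>0$ small, $U^0(s)-\ep G\in{\cal F}(s)$ because $U^0(s)\in{\rm ri}\,{\cal F}(s)$ and $G\in{\rm span}\,{\cal F}(s)$, and then $U^0(s)=\ep G+(U^0(s)-\ep G)$ exhibits (\ref{A10}) once the second summand is written as a finite conic combination of rank-one terms via Carath\'eodory. With these adjustments the proof is complete.
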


In what follows, we will say that a pair $(X^0,U^0)\in \mathbb C(\cop)$ satisfies

\vspace{2mm}

Assumption j), if $U^0$ is strictly complementary to $X^0$,

Assumption jj), if the following matrices are linearly independent
\be (\tau(i)+\tau(j))(\tau(i)+\tau(j))^\top, \ (i,j)\in V(J_b(s)),\, s \in S,\label{MM2}\ee

Assumption jjj), if $M(j)=P_*(s)\ \forall j \in J(s), \ \forall s \in S.$

\begin{remark} One can show that Assumptions j) and jjj) imply that  $X^0$ is strictly complementary to $U^0$.
\end{remark}

For $s\in S,$ using given set $P_*(s)$  and number $ p(s):=|P_*(s)|,$
let us introduce  matrix transformations
\be {\cal A}(X,s):{\mathbb S}(p)\to {\mathbb S}(p(s)) \ \mbox{ and } \ {\cal B}(W,s):{\mathbb S}(p(s))\to {\mathbb S}(p)\label{trans}\ee defined  by the rules
\bea& {\cal A}(X,s)=:X(s)=
\begin{pmatrix}
 x_{kq} , q\in P_*(s)\cr
k\in P_*(s)\end{pmatrix}, {\cal B}(W,s)=:U(s)=\begin{pmatrix}
 u_{kq} , q\in P\cr
k\in P\end{pmatrix},\nonumber\\
 & u_{kq}= w_{kq} \mbox{ for }  k \in  P_*(s)\mbox{ and }  q \in  P_*(s), \
 u_{kq}=0\mbox{ for } k \in P\setminus  P_*(s)\mbox{ and } q \in  P,\nonumber\eea
where $x_{kq}$ is $(k,q)$-th element of matrix $X$ and $w_{kq}$ is $(k,q)$-th element of matrix $W.$

\begin{proposition}\label{P-A}
Suppose that for a pair $(X^0,U^0)\in \mathbb C({\cal COP})$  Assumptions j) and jj) hold true.
Then  the following conditions are satisfied: \\
 i) there exists the only set of matrices $U^0(s), $ $ s \in S$, satisfying (\ref{1}),\\
ii) for any $s \in S$, matrix
$W^0(s):={\cal A}(U^0(s),s)$ admits a presentation
\be W^0(s)={\cal M}(s)({\cal M}(s))^\top \mbox{ with some } {\cal M}(s)\in \mathbb R^{p(s)\times n(s)},\
{\cal M}(s)> \mathbb O_{p(s)\times n(s)}, \label{W-M}\ee
iii) ${\cal A}(X^0,s)\!\in\!{\mathbb S}_+(p(s)), \, {\cal A}(U^0(s),s)\!\in \!{\mathbb S}_+(p(s)),$
 ${\cal A}(X^0,s)+{\cal A}(U^0(s),s)\!\in \!{\rm int}\,{\mathbb S}_+(p(s))\,
 \forall s \in S.$
\end{proposition}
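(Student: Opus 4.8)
The plan is to prove the three items of Proposition~\ref{P-A} in the order i), ii), iii), using Lemma~\ref{L-1} and the rank identity (\ref{rank1}) as the main engines, together with the structural properties (\ref{1.10}) of the blocks $U^0(s)$.

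For item i), I would argue that the uniqueness of the decomposition (\ref{1}) is forced by the linear independence Assumption~jj). Suppose $U^0=\sum_{s\in S}U^0(s)=\sum_{s\in S}\bar U^0(s)$ with $U^0(s),\bar U^0(s)\in{\cal F}(s)$. Setting $\Delta(s):=U^0(s)-\bar U^0(s)$, we get $\sum_{s\in S}\Delta(s)={\bf 0}$, where each $\Delta(s)\in{\rm span}\,{\cal F}(s)={\rm span}\{\bst(i,j)\bst(i,j)^\top:(i,j)\in V(J_b(s))\}$ by (\ref{rank1}). Since Assumption~jj) says precisely that the union over $s\in S$ of these spanning matrices is a linearly independent family, the subspaces ${\rm span}\,{\cal F}(s)$, $s\in S$, form a direct sum; hence $\Delta(s)={\bf 0}$ for every $s$, proving uniqueness. (Existence was already noted around (\ref{1}).)

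For item ii), fix $s\in S$. By Lemma~\ref{L-1}, Assumption~j) gives a representation of the (unique) block $U^0(s)$ of the form (\ref{A10}): a strictly positive combination of all the rank-one matrices $\bst(i,j)\bst(i,j)^\top$, $(i,j)\in V(J(s))$, plus a conic combination of further rank-one terms $t(i)t(i)^\top$ with $t(i)\in{\rm cone}\,T_a(s,X^0)$. Applying the transformation ${\cal A}(\cdot,s)$ and using that every $t\in T_a(s,X^0)$ has support inside $P_*(s)$ (by (\ref{P*}) and condition {\bf b)}), we obtain $W^0(s)={\cal A}(U^0(s),s)=\sum_\ell \beta_\ell v_\ell v_\ell^\top$ with all $\beta_\ell>0$ and each $v_\ell\in\mathbb R^{p(s)}$ a (componentwise nonnegative) restriction of the corresponding $\mathbb R^p$-vector to the index set $P_*(s)$. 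Collecting the $\sqrt{\beta_\ell}\,v_\ell$ as columns of a matrix ${\cal M}(s)$ yields $W^0(s)={\cal M}(s){\cal M}(s)^\top$; the entrywise strict positivity ${\cal M}(s)>\mathbb O$ needs a short argument, namely that for every index $k\in P_*(s)$ there is at least one vertex $\tau(j)$, $j\in J(s)$, (or one of the generators $\bst(i,j)$) whose $k$-th coordinate is nonzero — this holds because $P_*(s)=\bigcup_{j\in J(s)}{\rm supp}(\tau(j))$, and since in (\ref{A10}) \emph{all} coefficients $\alpha_{ij}$ for $(i,j)\in V(J(s))$ are strictly positive, every column of ${\cal M}(s)$ coming from a $\bst(i,j)=\tau(i)+\tau(j)$ is already entrywise positive on its support; a little care with which vectors to keep gives ${\cal M}(s)>\mathbb O$.

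For item iii), the containments ${\cal A}(X^0,s)\in\mathbb S_+(p(s))$ and ${\cal A}(U^0(s),s)\in\mathbb S_+(p(s))$ follow respectively from the fact that $P_*(s)\subset M(j)$ for all $j\in J(s)$ — so the principal submatrix $X^0(s)$ of the copositive matrix $X^0$ indexed by $P_*(s)$ is actually positive semidefinite, being copositive with a zero on every relevant ``face'' — and from item ii), since $W^0(s)={\cal M}(s){\cal M}(s)^\top\succeq0$. The interior statement ${\cal A}(X^0,s)+{\cal A}(U^0(s),s)\in{\rm int}\,\mathbb S_+(p(s))$ is the crux: I would show that the kernel of $X^0(s)$ is spanned by (a subset of) the vertex vectors $\tau(j)$, $j\in J(s)$, restricted to $P_*(s)$, while $W^0(s)$ is positive definite on that span — indeed $W^0(s)$ contains the strictly positive combination $\sum_{(i,j)\in V(J(s))}\alpha_{ij}\bst(i,j)\bst(i,j)^\top$ restricted to $P_*(s)$, and by (\ref{rank1}) these generators span the same space as the $t\,t^\top$, $t\in T_a(s,X^0)$, which in particular contains all $\tau(j)\tau(j)^\top$. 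Hence no nonzero vector can lie in both kernels, so the sum is positive definite.

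The main obstacle I anticipate is item ii) together with the positive-definiteness part of iii): one must pin down exactly which vectors the matrix ${\cal M}(s)$ is built from and verify simultaneously that (a) they stay strictly positive entrywise, (b) they span the range of $W^0(s)$, and (c) that range is complementary to $\ker X^0(s)$ inside $\mathbb S_+(p(s))$'s ambient space $\mathbb R^{p(s)}$. Making (a)--(c) compatible is where Assumptions~j) and~jj) are used in an essential, intertwined way — j) supplies the strict positivity of the coefficients in (\ref{A10}) and hence the full-support behavior, while jj) is what makes the block decomposition, and therefore $W^0(s)$ itself, well-defined.
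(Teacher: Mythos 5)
Your items i) and iii) follow essentially the paper's route: i) is the direct-sum argument forced by Assumption jj) (which the paper merely declares ``easy''), and iii) is exactly the paper's kernel-intersection argument, with $\ker {\cal A}(X^0,s)={\rm span}\{\tau_*(j),\,j\in J(s)\}$ and $W^0(s)$ definite on that span because all $\alpha_{ij}>0$ in (\ref{A10}). Both of these are fine.

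The gap is in item ii), which is the genuinely delicate part of the proposition. The natural columns of ${\cal M}(s)$ --- the vectors $\bst_*(i,j)=\sqrt{\alpha_{ij}}(\tau_*(i)+\tau_*(j))$ and $t_*(i)$ obtained by restricting (\ref{A10}) to $P_*(s)$ --- are only entrywise \emph{nonnegative}: $\tau_*(i)+\tau_*(j)$ vanishes on $P_*(s)\setminus({\rm supp}(\tau(i))\cup{\rm supp}(\tau(j)))$, which is in general nonempty, and nothing forces any single generator to have full support in $P_*(s)$. Since ${\cal M}(s)>\mathbb O_{p(s)\times n(s)}$ requires \emph{every} entry of \emph{every} column to be strictly positive, ``a little care with which vectors to keep'' cannot close this: discarding columns changes the product ${\cal M}(s){\cal M}(s)^\top$, while keeping them leaves zero entries. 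What is missing is the paper's perturbation-and-compensation device: set $\hat t$ equal to the sum of all generators (strictly positive precisely because $P_*(s)=\bigcup_{j}{\rm supp}(\tau(j))$ and all coefficients in (\ref{A10}) are positive), replace every column $v$ by $v+\theta\hat t$ --- which makes all columns strictly positive but adds $(2\theta+\theta^2\gamma)\,\hat t\hat t^\top$ to the Gram matrix --- and then invoke the rank identities (\ref{rank1}), (\ref{rank2}) to write $\hat t\hat t^\top=\sum_{(i,j)\in V(J_b(s))}\beta_{ij}(\theta)(\bst_*(i,j)+\theta\hat t)(\bst_*(i,j)+\theta\hat t)^\top$, so that the excess is absorbed by shrinking the coefficients to $\mu_{ij}(\theta)=1-(2\theta+\theta^2\gamma)\beta_{ij}(\theta)$, which remain positive for small $\theta>0$. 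Without some version of this step your factorization only yields ${\cal M}(s)\geq\mathbb O$, which is not enough for the later use of Proposition \ref{P-F} (and hence Theorems \ref{PP3Z} and \ref{PP4Z}), where strict positivity of the limiting factor is exactly what guarantees complete positivity of the nearby matrices $W(s,\ep)$.
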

\begin{proof} In fact, it is easy to show that Assumption jj) implies condition i).

To prove conditions ii) and iii), let us consider a fixed $s\in S$ and
 denote $X^0(s):={\cal A}(X^0,s),$ $W^0(s):={\cal A}(U^0(s),s).$
Due to  Lemma \ref{L-1},  the matrix $W^0(s)$ admits a presentation
\bea & W^0(s)=\sum\limits_{(i,j)\in V(J(s))}\bst_*(i,j)\bst_*(i,j)^\top +\sum\limits_{i \in I(s)}t_*(i)(t_*(i))^\top \label{W(s)}\\
&\mbox{where } \qquad  \qquad   \qquad  \tau_*(j):=(e^\top_k\tau(j), k \in P_*(s))\in \mathbb R^{p(s)}_+,\, j \in J(s),\qquad  \qquad  \qquad \nonumber\\
&\alpha_{ij}>0 , \,\bst_*(i,j):=\sqrt{\alpha_{ij}}(\tau_*(i)+\tau_*(j))
\in \mathbb R^{p(s)}_+,\; (i,j)\in V(J(s)),\,\nonumber \\
& t_*(i)\in T_*(s,X^0):={\rm cone}\{\tau_*(j), j \in J(s)\}\subset \mathbb R^{p(s)}_+,\, i \in I(s).\nonumber\eea

Let us prove conditions ii). Denote
$$\hat t=\sum\limits_{(i,j)\in V(J(s))}\bst_*(i,j) +\sum\limits_{i \in I(s)}t_*(i).$$
By construction, $\hat t>{\bf 0}$ and $\hat t\in T_*(s,X^0)$. It is easy to check that
\bea&\sum\limits_{(i,j)\in V(J(s))}(\bst_*(i,j)+\theta \hat t)(\bst_*(i,j)+\theta \hat t)^\top +
\sum\limits_{i \in I(s)}(t_*(i)+\theta \hat t)(t_*(i)+\theta \hat t)^\top \label{W0}\\
&=W^0(s)+2\theta \hat t \hat t^\top+\theta ^2\gamma  \hat t \hat t^\top,  \ \gamma:=| V(J(s))|+|I(s)|.\nonumber\eea

Note that equalities   (\ref{rank1}) imply the equalities
\bea&\!\! {\rm rank}(t t^\top, t \in T_*(s,X^0))\!=\!{\rm rank}\big(\bst_*(i,j)\bst_*(i,j)^\top\!\!,\,
 (i,j)\!\in\! V(J_b(s))\big)\!=\!| V(J_b(s))|.\label{rank2}\eea
Hence,  for sufficiently small $\theta >0$, we have
\bea& {\rm rank}(t t^\top, t \in T_*(s,X^0))={\rm rank}\big((\bst_*(i,j)+\theta \hat t)(\bst_*(i,j)+\theta \hat t)^\top\!,\,
 (i,j)\in V(J_b(s))\big).\nonumber\eea
Taking into account this equality, equalities (\ref{rank2}) and the inclusion $\hat t\in T_*(s,X^0)$, we obtain that
\bea& \hat t\hat t^\top=\sum\limits_{(i,j)\in V(J_b(s))}\beta_{ij}(\theta)(\bst_*(i,j)+\theta \hat t)(\bst_*(i,j)+\theta \hat t)^\top,\label{31}\\
&\hat t\hat t^\top=\sum\limits_{(i,j)\in V(J_b(s))}\beta_{ij}\bst_*(i,j)\bst_*(i,j)^\top,\
\beta_{ij}(\theta)=\beta_{ij}+O(\theta), \, (i,j)\in  V(J_b(s)).\nonumber\eea
Hence, it follows from (\ref{W0}) and (\ref{31}) that
\bea &W^0(s)=\sum\limits_{(i,j)\in V(J(s))}(\bst_*(i,j)+\theta \hat t)(\bst_*(i,j)+\theta \hat t)^\top +
\sum\limits_{i \in I(s)}(t_*(i)+\theta \hat t)(t_*(i)+\theta \hat t)^\top - \nonumber\\
&(2\theta +\theta ^2\gamma )\sum\limits_{(i,j)\in V(J_b(s))}\beta_{ij}(\theta)(\bst_*(i,j)+\theta \hat t)(\bst_*(i,j)+\theta \hat t)^\top = \nonumber\\
&\sum\limits_{(i,j)\in V(J(s))}\mu_{ij}(\theta)(\bst_*(i,j)+\theta \hat t)(\bst_*(i,j)+\theta \hat t)^\top +
\sum\limits_{i \in I(s)}(t_*(i)+\theta \hat t)(t_*(i)+\theta \hat t)^\top\nonumber\eea
where $\mu_{ij}(\theta)=1-(2\theta +\theta ^2\gamma )\beta_{ij}(\theta),$ $ (i,j)\in V(J_b(s)),$ $\mu_{ij}(\theta)=1,$ $ (i,j)\in V(J(s))\setminus V(J_b(s)).$
Notice that for sufficiently small $\theta>0$ the following inequalities hold true
$$\mu_{ij}(\theta)>0, \, \bst_*(i,j)+\theta \hat t>{\bf 0} \ \forall  (i,j)\in V(J(s)); \ t_*(i)+\theta \hat t>{\bf 0}\ \forall i \in I(s).$$
Hence we have shown that the matrix $W^0(s)$ admits a presentation
(\ref{W-M}) with
$${\cal M}(s)=\big(\sqrt{\mu_{ij}(\theta)}(\bst_*(i,j)+\theta \hat t), (i,j)\in V(J(s)); \;  t_*(i)+\theta \hat t, i \in I(s)\big).$$

Let us prove conditions iii).
By construction, $W^0(s)\in {\cal CP}(p(s))$, $X^0(s)\in {\cal COP}(p(s))$ and $X^0(s)\bar t={\bf 0}$ where
 $\bar t=\sum\limits_{j \in J(s)}\tau_*(j)\in \mathbb R^{p(s)}$, $\bar t>{\bf 0}.$
Hence
$$X^0(s)\in {\mathbb S}_+(p(s)),\ W^0(s)\in {\mathbb  S}_+(p(s))\ \Longrightarrow\ X^0(s)+W^0(s)\in {\mathbb  S}_+(p(s)).$$
Let  $\tilde t \in \mathbb R^{p(s)}$ be such that
$\tilde t^\top(X^0(s)+W^0(s))\tilde t=0 $. This implies
$$ \tilde t^\top X^0(s)\tilde t =0, \; \tilde t^\top W^0(s)\tilde t =0 \; \Longrightarrow\;
 X^0(s)\tilde t ={\bf 0}, \;  W^0(s)\tilde t ={\bf 0}.$$
If $X^0(s)\tilde t ={\bf 0}$, then $\tilde t\in H(X^0(s)):=\{t \in \mathbb R^{p(s)}:X^0(s)t={\bf 0}\}.$ Note  that
 $X^0(s)\in {\mathbb  S}_+(p(s))$ and $X^0(s)\bar t={\bf 0},$ $\bar t>{\bf 0}.$ Hence it follows from
 Proposition \ref{PPA2} that
\be H(X^0(s))={\rm span}\{\tau_*(j), j \in J(s)\}.\label{Hs}\ee
This implies that $\tilde t=\sum\limits_{j \in J(s)}\beta_j\tau_*(j).$ Then taking into account presentation (\ref{W(s)}), we obtain
$0=\tilde t^\top W^0(s)\tilde t =\sum\limits_{(i,j)\in V(J(s))}\alpha_{ij}(\tilde t^\top(\tau_*(i)+\tau_*(j)))^2+
 \sum\limits_{i \in I(s)}(\tilde t^\top t_*(i))^2.$
Consequently, $$\tilde t^\top(\tau_*(i)+\tau_*(j))=0\ \forall (i,j)\in V(J(s)) \ \Longrightarrow\ \tilde t^\top\tau_*(j)=0\ \forall j \in J(s).$$
It follows from the latter equalities and (\ref{Hs}) that $\tilde t\in (H(X^0(s)))^\bot$.
This inclusion  and inclusion $\tilde t\in (H(X^0(s)))$ proved above imply the  equality $\tilde t={\bf 0}$.
Thus we have proved that condition iii) holds true. \end{proof}

To prove the next proposition, we will use the following lemma
proved in \cite{groetzner2020factorization}.
\begin{lemma} \label{dur} Let ${\cal M},\, {\cal L}\in \mathbb R^{p\times m}. $ Then  ${\cal L}{\cal L}^\top={\cal M}{\cal M}^\top$ iff
there exists an orthogonal matrix $\Omega\in \mathbb R^{m\times m} $ such that ${\cal L}\Omega={\cal M},$  ${\cal L}={\cal M}\Omega^\top.$
\end{lemma}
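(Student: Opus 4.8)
The plan is to reduce the statement to the classical fact that two ordered $p$-tuples of vectors in $\mathbb R^m$ with the same Gram matrix are carried onto one another by an orthogonal transformation of $\mathbb R^m$. First note that the two claimed identities ${\cal L}\Omega={\cal M}$ and ${\cal L}={\cal M}\Omega^\top$ are interchangeable for an orthogonal $\Omega$, since then $\Omega^{-1}=\Omega^\top$; and the ``if'' direction is immediate, because ${\cal L}\Omega={\cal M}$ with $\Omega\Omega^\top=E(m)$ gives ${\cal M}{\cal M}^\top={\cal L}\Omega\Omega^\top{\cal L}^\top={\cal L}{\cal L}^\top$. So the work is entirely in the ``only if'' direction.

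For that, I would let $a_i^\top$ and $b_i^\top$ ($i=1,\dots,p$) be the $i$-th rows of ${\cal L}$ and ${\cal M}$, viewed as vectors $a_i,b_i\in\mathbb R^m$. The $(i,j)$-entry of ${\cal L}{\cal L}^\top$ is $a_i^\top a_j$ and that of ${\cal M}{\cal M}^\top$ is $b_i^\top b_j$, so the hypothesis ${\cal L}{\cal L}^\top={\cal M}{\cal M}^\top$ says exactly that $a_i^\top a_j=b_i^\top b_j$ for all $i,j$. Define a linear map $\Phi$ on $L:={\rm span}\{a_i,\,i=1,\dots,p\}$ by $\Phi(\sum_i c_ia_i)=\sum_i c_ib_i$. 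For any coefficients $c_i$, $d_j$ one has
\[\Big\langle\sum_i c_ib_i,\,\sum_j d_jb_j\Big\rangle=\sum_{i,j}c_id_j\,b_i^\top b_j=\sum_{i,j}c_id_j\,a_i^\top a_j=\Big\langle\sum_i c_ia_i,\,\sum_j d_ja_j\Big\rangle,\]
so $\Phi$ is well defined (take $d=c$: a vector of zero norm is ${\bf 0}$, hence $\sum_i c_ia_i={\bf 0}$ forces $\sum_i c_ib_i={\bf 0}$) and inner-product preserving; thus $\Phi$ is an isometry of $L$ onto $L':={\rm span}\{b_i,\,i=1,\dots,p\}$.

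Being an isometry, $\Phi$ is injective, so $\dim L=\dim L'$; in fact both equal ${\rm rank}\,{\cal L}={\rm rank}({\cal L}{\cal L}^\top)={\rm rank}({\cal M}{\cal M}^\top)={\rm rank}\,{\cal M}$. Consequently $L^\bot$ and $(L')^\bot$ have the same dimension in $\mathbb R^m$, and I would extend $\Phi$ to $\Omega_0\in\mathbb R^{m\times m}$ by sending a fixed orthonormal basis of $L^\bot$ to one of $(L')^\bot$; then $\Omega_0$ maps an orthonormal basis of $\mathbb R^m$ to an orthonormal basis of $\mathbb R^m$, hence is orthogonal, and $\Omega_0a_i=b_i$ for all $i$, i.e. $\Omega_0{\cal L}^\top={\cal M}^\top$, equivalently ${\cal L}\Omega_0^\top={\cal M}$. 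Taking $\Omega:=\Omega_0^\top$ yields ${\cal L}\Omega={\cal M}$ and ${\cal L}={\cal M}\Omega^\top$, completing the argument. The only delicate point is this dimension bookkeeping for the orthogonal extension --- it is exactly where the rank identity is used; everything else is routine. An essentially equivalent route, if one prefers, is to fix a common spectral decomposition ${\cal L}{\cal L}^\top={\cal M}{\cal M}^\top=QDQ^\top$, build singular value decompositions ${\cal L}=Q\Sigma V_{\cal L}^\top$ and ${\cal M}=Q\Sigma V_{\cal M}^\top$ sharing $Q$ and the singular-value matrix $\Sigma$, and set $\Omega=V_{\cal L}V_{\cal M}^\top$.
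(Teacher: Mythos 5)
Your proof is correct. Note that the paper does not prove this lemma at all --- it is imported verbatim from the cited reference \cite{groetzner2020factorization} --- so there is no in-paper argument to compare yours against; what you have written is a complete, self-contained justification. The Gram-matrix/isometry route is the standard one: the two points that actually require care, namely the well-definedness of $\Phi$ (a zero-norm combination $\sum_i c_ia_i={\bf 0}$ forces $\sum_i c_ib_i={\bf 0}$) and the dimension bookkeeping $\dim L^\bot=\dim (L')^\bot$ needed to extend $\Phi$ to an orthogonal map of all of $\mathbb R^m$, are both addressed explicitly, the latter via the identity ${\rm rank}\,{\cal L}={\rm rank}({\cal L}{\cal L}^\top)$, which is valid over the reals. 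The final bookkeeping ${\cal L}\Omega={\cal M}\Leftrightarrow{\cal L}={\cal M}\Omega^\top$ for orthogonal $\Omega$ is also handled correctly, and the SVD-based alternative you sketch at the end would work equally well.
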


\begin{proposition}\label{P-F} Let $\ep_0>0$ be a sufficiently small number.
Suppose that for  $s\in S,$ matrix  $W^0(s)\in {\cal CP}(p(s))$  satisfies relations (\ref{W-M}) and
 matrices $W(s,\ep)\in {\mathbb S}_+(p(s)), $ $\ep\in [0,\ep_0],$  are such that
$W(s,\ep)\to W^0(s) \mbox{ as } \ep \to 0.$
Then for $s \in S$ and $\ep\in [0,\ep_0],$  the matrix  $W(s,\ep)$  admits a presentation
\be \!\!W(s,\ep)= {\cal M}(s,\ep)( {\cal M}(s,\ep))^\top \mbox{ with }
  {\cal M}(s,\ep)\in \mathbb R^{p(s)\times n(s)}, \;  {\cal M}(s,\ep)> \mathbb O_{p(s)\times  n(s)}.\label{11}\ee
\end{proposition}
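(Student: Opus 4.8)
The plan is to combine the continuity of the positive semidefinite square root with Lemma~\ref{dur}, the latter serving to rotate an arbitrary square-factor of $W(s,\ep)$ onto the prescribed ``positive shape'' recorded by $\mathcal M(s)$. Fix $s\in S$ and abbreviate $q:=p(s)$, $m:=n(s)$; recall from (\ref{W-M}) that $W^0(s)=\mathcal M(s)(\mathcal M(s))^\top$ with $\mathcal M(s)\in\mathbb R^{q\times m}$, $\mathcal M(s)>\mathbb O_{q\times m}$. Since the factors we must produce have exactly $m$ columns, the argument uses $m\ge q$ (so that a $q\times q$ square-factor can be padded to width $m$); I return to this point at the end and take $m\ge q$ for now.

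First I would fix, for each $\ep\in[0,\ep_0]$, a width-$m$ square-factor of $W(s,\ep)$ depending continuously on $\ep$: let $G(\ep):=W(s,\ep)^{1/2}\in\mathbb S_+(q)$ be the positive semidefinite square root, so that $G(\ep)G(\ep)^\top=W(s,\ep)$, and set $\widetilde G(\ep):=\big(\,G(\ep)\ \ \mathbb O_{q\times(m-q)}\,\big)\in\mathbb R^{q\times m}$, so that $\widetilde G(\ep)(\widetilde G(\ep))^\top=W(s,\ep)$. Because $A\mapsto A^{1/2}$ is continuous on $\mathbb S_+(q)$ and $W(s,\ep)\to W^0(s)$, we get $\widetilde G(\ep)\to\widetilde G(0)$ as $\ep\to0$, with $\widetilde G(0)(\widetilde G(0))^\top=W^0(s)=\mathcal M(s)(\mathcal M(s))^\top$.

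Next I would apply Lemma~\ref{dur} to the pair $\widetilde G(0),\,\mathcal M(s)\in\mathbb R^{q\times m}$ (they have equal Gram matrices): this yields an orthogonal $\Omega\in\mathbb R^{m\times m}$ with $\widetilde G(0)\,\Omega=\mathcal M(s)$. Define $\mathcal M(s,\ep):=\widetilde G(\ep)\,\Omega\in\mathbb R^{q\times m}$. Since $\Omega\Omega^\top=E(m)$,
\[
\mathcal M(s,\ep)(\mathcal M(s,\ep))^\top=\widetilde G(\ep)\,\Omega\Omega^\top(\widetilde G(\ep))^\top=\widetilde G(\ep)(\widetilde G(\ep))^\top=W(s,\ep),
\]
so the factorization in (\ref{11}) holds except for strict positivity. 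For the latter, observe $\mathcal M(s,\ep)=\widetilde G(\ep)\,\Omega\to\widetilde G(0)\,\Omega=\mathcal M(s)>\mathbb O_{q\times m}$ as $\ep\to0$; since the entrywise positive $q\times m$ matrices form an open set, there is $\ep_0(s)>0$ with $\mathcal M(s,\ep)>\mathbb O_{q\times m}$ for all $\ep\in[0,\ep_0(s)]$. Carrying this out for each of the finitely many $s\in S$, one sees that for $\ep_0$ small enough (say $\ep_0\le\min_{s\in S}\ep_0(s)$) the conclusion (\ref{11}) holds for all $s\in S$ and $\ep\in[0,\ep_0]$.

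The one delicate point is purely dimensional: padding the $q\times q$ square root to width $m$ requires $m=n(s)\ge q=p(s)$, equivalently ${\rm rank}\,W(s,\ep)\le n(s)$ for all $\ep$ under consideration, and only under such a bound does a width-$n(s)$ factorization of $W(s,\ep)$ varying continuously with $\ep$ exist — the padded positive semidefinite square root being the simplest such choice. Everything after producing $\widetilde G(\ep)$ is routine. (An implicit function theorem argument applied to $M\mapsto MM^\top$ at $M=\mathcal M(s)$ would be less convenient, since $\mathcal M(s)$ need not have full row rank — $W^0(s)$ is generally rank-deficient — whereas Lemma~\ref{dur} avoids rank issues entirely.)
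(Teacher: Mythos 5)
Your proof is correct and follows essentially the same route as the paper: take a $p(s)\times p(s)$ square factor of $W(s,\ep)$, pad it with zero columns to width $n(s)$, rotate the limiting factor onto ${\cal M}(s)$ via Lemma \ref{dur}, and conclude entrywise positivity from convergence and openness of the set of positive matrices; your use of the continuous positive semidefinite square root is in fact slightly cleaner than the paper's choice of an arbitrary bounded factor $B(s,\ep)$ together with ``any limit point'' of it, which strictly speaking only yields convergence of the rotated factor along a subsequence. The dimensional caveat you leave open is disposed of in the paper by a without-loss-of-generality step: if $n(s)<p(s)$, replace one column ${\cal M}_1(s)$ of the factor in (\ref{W-M}) by $p(s)-n(s)+1$ identical columns ${\cal M}_1(s)/\sqrt{p(s)-n(s)+1}$, which preserves both the Gram matrix and strict positivity while raising the width to $p(s)$.
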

\begin{proof} Let us consider a fixed $s \in S.$
By assumption
$W(s,\ep)\in {\mathbb  S}_+(p(s)),$ $\ep\in [0,\ep_0].$
Hence  there exist matrices $B(s,\ep)\in \mathbb R^{p(s)\times p(s)}$  such that
$||B(s,\ep)||<\infty,$ $W(s,\ep)=B(s,\ep)B^\top(s,\ep)$ for $ \ep\in [0,\ep_0]$.
Let $B(s,0)$  be any limit point of the sequence $B(s,\ep)$ as $\ep\to 0.$  Then taking into account that $W(s,\ep)\to W^0(s)$ as $\ep\to 0,$ we obtain that
$W^0(s)=B(s,0)B^\top(s,0).$

On the other hand, it follows from relations (\ref{W-M})  that
$W^0(s)={\cal M}(s){\cal M}^\top(s),$
where ${\cal M}(s)\in \mathbb R^{p(s)\times n(s)} , \ {\cal M}(s)>\mathbb O_{p(s)\times n(s)}.$
Without loss of generality, we may suppose that $n(s)\geq p(s)$ since otherwise one can extend the  matrix
${\cal M}(s)$ with columns ${\cal M}_i(s),$ $ i=1,...,n(s),$ by  replacing  one  column ${\cal M}_1(s)$ with $p(s)-n(s)+1$ identical
 columns ${\cal M}_1(s)/\sqrt{p(s)-n(s)+1}.$

 Thus we have
\bea &W(s,\ep)=\bar B(s,\ep)\bar B(s,\ep)^\top,\ W^0(s)=\bar B(s,0)\bar B(s,0)^\top={\cal M}(s){\cal M}(s)^\top\
\mbox{ where }\nonumber\\
&\bar B(s,\ep):=[B(s,\ep), \mathbb O_{p(s)\times (n(s)-p(s))}] \mbox{ if } n(s)>p(s),
\bar B(s,\ep):=B(s,\ep) \mbox{ if } n(s)=p(s). \nonumber\eea
Applying Lemma \ref{dur}, we conclude that there exists orthogonal matrix $\Omega\in \mathbb R^{n(s)\times n(s)} $ such
that
$ \bar B(s,0)\Omega={\cal M}(s).$

Let us set ${\cal M}(s,\ep):=\bar B(s,\ep)\Omega.$ Since ${\cal M}(s)>\mathbb O_{p(s)\times n(s)}$ and  ${\cal M}(s,\ep) ={\cal M}(s)+O(\ep)$, we conclude
that   ${\cal M}(s,\ep)> \mathbb O_{p(s)\times n(s)}$
for sufficiently small $\ep\geq 0$. Relations (\ref{11}) follows from the equalities
$W(s,\ep)=\bar B(s,\ep)\bar B(s,\ep)^\top=\bar B(s,\ep)\Omega \Omega^\top \bar B(s,\ep)^\top={\cal M}(s,\ep){\cal M}(s,\ep)^\top.$\end{proof}

\section{System of defining equations for complementarity set }\label{S-4}
The aim of this section is to obtain a system of equations that allows us to describe the complementarity set for the cone $\cop$
in a neighborhood  of a given point $(X^0,U^0)\in \mathbb C(\cop)$.

\begin{theorem}\label{PP3Z} Suppose that for a pair $(X^0,U^0)\in \mathbb C(\cop)$  Assumptions j)--jjj) hold true.
Then for any $(X(\ep),U(\ep))\in \mathbb C(\cop)$ such that
$X(\ep) \to X^0,\ U(\ep)\to U^0 \mbox{ as } \ep \to 0,$
there exist $\ep_0>0$ and
 matrix functions  $ W(s,\ep)\in  {\mathbb S}(p(s)),$ $ s \in S,$ $\ep\in [0,\ep_0],$
such that
\bea& {\cal A}(X(\ep),s)W(s,\ep)+W(s,\ep){\cal A}(X(\ep),s)=\mathbb O_{p(s)\times p(s)}\ \forall\, s\in S,\ \label{lab-0}\\
&U(\ep)=\sum\limits_{s\in S} {\cal B}(W(s,\ep),s).\label{lab-1}\eea
\end{theorem}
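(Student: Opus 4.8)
The plan is to reduce everything to the block-diagonal pieces indexed by $s\in S$ and then invoke the structural results already assembled. First I would fix a sequence $(X(\ep),U(\ep))\in\mathbb C(\cop)$ with $X(\ep)\to X^0$, $U(\ep)\to U^0$. Since $U(\ep)\in\cp$ and $X(\ep)\bullet U(\ep)=0$, every rank-one term $t\,t^\top$ in a completely positive decomposition of $U(\ep)$ has $t\in T_a(X(\ep))$. The key topological input is that for $\ep$ small the zero set $T_a(X(\ep))$ is contained in a small neighborhood of $T_a(X^0)$, and — using Assumption jjj), i.e. $M(j)=P_*(s)$ for all $j\in J(s)$ — this neighborhood splits into the disjoint pieces near $T_a(s,X^0)$, $s\in S$, with the supports of nearby vectors contained in $P_*(s)$. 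Consequently $U(\ep)$ decomposes as $U(\ep)=\sum_{s\in S}\tilde U(s,\ep)$ with $\tilde U(s,\ep)\in\mathcal F(s)$, i.e. $\tilde U(s,\ep)_{kq}=0$ for $k\notin P_*(s)$, and $\tilde U(s,\ep)\to U^0(s)$, where $U^0(s)$ is the \emph{unique} family from Proposition~\ref{P-A}i). Setting $W(s,\ep):=\mathcal A(\tilde U(s,\ep),s)\in\mathbb S(p(s))$ immediately gives (\ref{lab-1}), since $\mathcal B(\mathcal A(\tilde U(s,\ep),s),s)=\tilde U(s,\ep)$ by the zero-support property; and $W(s,\ep)\to W^0(s)=\mathcal A(U^0(s),s)$.

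Next I would establish that $W(s,\ep)\in\mathbb S_+(p(s))$ for $\ep$ small. This is where Proposition~\ref{P-A}iii) does the work: $\mathcal A(X^0,s)+\mathcal A(U^0(s),s)\in\operatorname{int}\mathbb S_+(p(s))$, so $\mathcal A(X(\ep),s)+W(s,\ep)\in\operatorname{int}\mathbb S_+(p(s))$ for $\ep$ small. Since $W(s,\ep)$ is a sum of rank-one matrices $\tau_*\tau_*^\top$ with $\tau_*\in\mathbb R^{p(s)}_+$, it is PSD outright; and since $\mathcal A(X(\ep),s)\in\cop(p(s))$ while $X(\ep)$ annihilates every normalized zero near $T_a(s,X^0)$, the matrix $\mathcal A(X(\ep),s)$ is PSD as well (a copositive matrix that is the limit of matrices with a strictly positive zero is PSD on the relevant block — this is exactly the argument used in the proof of Proposition~\ref{P-A}iii)). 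So both summands, and their sum, are PSD.

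Now the complementarity identity (\ref{lab-0}) is the heart of the matter. The point is that $\mathcal A(X(\ep),s)\bullet W(s,\ep)=0$: indeed $\mathcal A(X(\ep),s)\bullet\mathcal A(\tilde U(s,\ep),s)=X(\ep)\bullet\tilde U(s,\ep)$ by the support structure, and summing over $s$ gives $X(\ep)\bullet U(\ep)=0$, while each term $X(\ep)\bullet\tilde U(s,\ep)$ is $\geq 0$ (as $\tilde U(s,\ep)\in\cp$ and $X(\ep)\in\cop$), forcing each to vanish. For two PSD matrices $A,B$, the relation $A\bullet B=0$ is equivalent to $AB=0$, hence to $AB+BA=0$; applying this with $A=\mathcal A(X(\ep),s)$, $B=W(s,\ep)$ yields (\ref{lab-0}). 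I would note that this is where it becomes essential to have passed to the $p(s)$-dimensional blocks where the matrices are genuinely PSD — the analogous statement fails for copositive/completely positive matrices in the ambient space.

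The main obstacle is the decomposition step in the first paragraph: showing that for small $\ep$ the completely positive decomposition of $U(\ep)$ really does separate into the $s$-blocks with the correct zero-support pattern, and that the resulting $W(s,\ep)$ converge to $W^0(s)$. This requires a semicontinuity argument on $T_a(X(\ep))$ together with the combinatorial content of Assumptions jj) and jjj): Assumption jjj) guarantees the pieces $T_a(s,X^0)$ sit in coordinate subspaces that are ``separated enough'' that nearby zeros cannot mix coordinates across different $s$, and Assumption jj) (via Proposition~\ref{P-A}i)) guarantees the limiting block matrices $U^0(s)$ are uniquely determined, so the convergence $W(s,\ep)\to W^0(s)$ is unambiguous. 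Once this structural fact is in hand, everything else is an application of the standard PSD-complementarity identity $A\bullet B=0\iff AB=0$ on each block.
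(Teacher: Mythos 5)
Your overall architecture matches the paper's: one-sided semicontinuity of $T_a(X(\ep))$, the splitting into the pieces $T_a(s,\ep)$ and the support argument via Assumption jjj) and the positivity of $\nu_{kj}$, the definition $W(s,\ep)={\cal A}(\tilde U(s,\ep),s)$, the forcing of ${\cal A}(X(\ep),s)\bullet W(s,\ep)=0$ term by term, and the final step ``two PSD matrices with zero inner product anticommute to zero'' are all exactly what the paper does. The gap is in the middle: your justification that ${\cal A}(X(\ep),s)\in\mathbb S_+(p(s))$ does not work. The fact you appeal to is ``a copositive matrix with a strictly positive zero is PSD,'' but you never produce a strictly positive zero of ${\cal A}(X(\ep),s)$ for $\ep>0$. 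The zeros of $X(\ep)$ lying in $T_a(s,\ep)$ are supported on $P_*(s)$ but need not individually be strictly positive there, their convex combinations need not be zeros of $X(\ep)$, and the one-sided bound $\delta(T_a(X(\ep)),T_a(X^0))\to 0$ allows $T_a(s,\ep)$ to degenerate to a few vertices. ``Limit of matrices with a strictly positive zero'' gives information about the limit $X^0(s)$, not about $X(s,\ep)$, and closeness to the singular PSD matrix $X^0(s)$ does not make $X(s,\ep)$ PSD. The generalized Horn matrices $H(\theta(\ep))$ in Section 6 are a direct counterexample to your reasoning: there $S=\{1\}$, conditions i) and iii) of Proposition \ref{P-A} hold, $X(\ep)$ annihilates every vertex of $T_a(X^0)$ perturbed, $X(\ep)+U(\ep)\in{\rm int}\,\mathbb S_+(5)$, and yet $X(\ep)\notin\mathbb S_+(5)$ and \eqref{lab-0} fails. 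Since your argument uses only the ingredients present in that example, it would prove a false statement.

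What is missing is precisely the paper's use of Assumption j) through condition ii) of Proposition \ref{P-A} and Proposition \ref{P-F}: strict complementarity gives $W^0(s)={\cal M}(s){\cal M}(s)^\top$ with ${\cal M}(s)$ entrywise \emph{strictly} positive, and the Groetzner--D\"ur orthogonal-equivalence lemma transfers this to $W(s,\ep)={\cal M}(s,\ep){\cal M}(s,\ep)^\top$ with ${\cal M}(s,\ep)>\mathbb O$ for small $\ep$. Then ${\cal A}(X(\ep),s)\bullet W(s,\ep)=0$, which you have already established, together with copositivity of ${\cal A}(X(\ep),s)$, forces each (strictly positive) column of ${\cal M}(s,\ep)$ to be a zero of ${\cal A}(X(\ep),s)$; only at that point does the ``copositive with a strictly positive zero implies PSD'' fact apply and yield ${\cal A}(X(\ep),s)\in\mathbb S_+(p(s))$, after which your concluding step is correct. (A minor additional point: your $\tilde U(s,\ep)$ lies in ${\rm cone}\{tt^\top:t\in T_a(s,\ep)\}$, not in ${\cal F}(s)$, and the convergence $W(s,\ep)\to W^0(s)$ needs the uniqueness from condition i) plus a compactness argument, as in the paper.)
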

\begin{proof}
For $A\subset \mathbb R^p, $ $B\subset \mathbb R^p, $ and $t\in  \mathbb R^p, $ denote
$$\rho(t,B):=\min\limits_{\tau \in B}||t-\tau||,\ \delta(A,B):=\max\limits_{t \in A}\rho(t,B).$$

It is not difficult to show that Assumption jjj)  implies that
\bea& T_a(s,X^0)\cap T_a(\bar s,X^0)=\emptyset \ \, \forall s \in S, \ \forall \bar s \in S,\; s\not=\bar s,\nonumber\\
& \nu_{kj}:= e^\top_{k}X^0\tau(j)>0 \ \forall \,k \in P\setminus P_*(s), \ \forall j \in J(s), \ \forall s \in S.\label{nu}\eea
Hence there exists $\gamma>0$ such that
\be \{t \in T:\rho(t,T_a(s,X^0))\leq \gamma\}\cap \{t \in T:\rho(t,T_a(\bar s,X^0))\leq \gamma\}=\emptyset\ \forall s\in S,\ \forall \bar s\in S,\ s\not=\bar s.\label{01}\ee

Set
\be \delta(\ep):=\delta(T_a(X(\ep)),T_a(X^0)),\ t(\ep):={\rm arg}\,\max\limits_{t \in T_a(X(\ep))}\rho(t, T_a(X^0)).\label{02}\ee
By construction,
\be \delta(\ep)=\rho(t(\ep), T_a(X^0)),\ t(\ep)\in T_a(X(\ep))\subset T.\label{2}\ee
Let $\ep_k, k=1,2,...,$ be a sequence such that $\ep_k\to 0$ as $k\to \infty.$
Let $\delta_*$ and $t^*$ be any limit points of the sequences $\delta(\ep_k),$ $ t(\ep_k)$, $k=1,2,\dots,$ respectively. Notice that such points exist.
Then it follows from (\ref{02}), (\ref{2}) and condition $X(\ep)\to X^0$ as $\ep\to 0 $ that
$\delta_*=\rho(t^*, T_a(X^0))$ and $t^*\in T_a(X^0).$ It follows from these relations that $\delta_*=0$.
Thus we have shown that for any sequence
 $\ep_k, k=1,2,...,$  such that $\ep_k\to 0$ as $k\to \infty$  we have
\bea& \delta(\ep_k)\to 0\ \mbox{ as } k\to \infty.\label{4}\eea
Denote
\bea &  T_a(\ep):=T_a(X(\ep)),\ T_a(s,\ep):=\{t \in  T_a(\ep): \rho(t, T_a(s,X^0))\leq \delta(\ep)\}, \ s\in \bar S,\label{4.1}\\
&\bar S:=\{s \in S: T_a(s,\ep)\not=\emptyset\}.\nonumber\eea
Due to (\ref{01}), (\ref{02}) and (\ref{4}), for sufficiently small $\ep\geq 0$,  we obtain
\be T_a(\ep)=\bigcup\limits_{s \in S}T_a(s,\ep),\ T_a(s,\ep)\cap T_a(\bar s,\ep)=\emptyset \ \forall s \in S,
\ \forall \bar s \in S,\; s\not=\bar s.\label{4.2}\ee

Let us fix $s \in \bar  S$ and consider any vector $\eta(s,\ep)\in T_a(s,\ep).$
By assumption, $X(\ep)\in \cop$ and $\eta(s,\ep)\in T_a(\ep)$, hence
 \be (\eta(s,\ep))^\top X(\ep)\eta(s,\ep)=0\ \Longrightarrow\
 e^\top_k X(\ep)\eta(s,\ep)\left\{\begin{array}{l}
=0\mbox{ if } e^\top_k \eta(s,\ep)>0,\cr
\geq 0\mbox{ if } e^\top_k \eta(s,\ep)=0,\cr\end{array} \right. \forall k \in P.\label{5}\ee
Suppose that there exist $k_0\in P\setminus P_*(s)$ and a sequence  $\ep_k, k=1,2,...,$ such that $\ep_k\to 0$ as $k\to \infty$  and
$e^\top_{k_0}\eta(s,\ep_k) >0\ \forall \, k=1,2,....$ Then due to (\ref{5}) we have
$$e^\top_{k_0}X(\ep_k)\eta(s,\ep_k)=0 \ \forall \, k=1,2,...$$
This implies that
\be e^\top_{k_0}X^0\eta^*(s)=0\label{6}\ee
where $\eta^*(s)$ is a limit point of the sequence $\eta(s,\ep_k),$ $k=1,2,...$
It follows from (\ref{4}), (\ref{4.1}) that $\eta^*(s)\in T_a(s,X^0)$ and, consequently,
$$\eta^*(s)=\sum\limits_{j \in J(s)}\alpha_j\tau(j), \ \alpha_j\geq 0, \, j \in J(s), \, \sum\limits_{j \in J(s)}\alpha_j=1.$$
Taking into account these relations, inequalities (\ref{nu}), and equality (\ref{6}), we obtain
$$0= e^\top_{k_0}X^0\eta^*(s)=\sum\limits_{j \in J(s)}\alpha_je^\top_{k_0}X^0\tau(j)\geq \min\limits_{j \in J(s)}\nu_{k_0 j}>0.$$
We have obtained the contradictory relationships.

Thus we have shown that for sufficiently small $\ep\geq 0$,
  the following equalities hold true
\be e^\top_{k}\eta(s,\ep)=0 \ \forall k \in P\setminus P_*(s), \ \forall \eta(s,\ep)\in T_a(s,\ep), \ \forall s\in \bar S.\label{6.1}\ee

By construction, $(X(\ep), U(\ep))\in \mathbb C(\cop)$, hence matrix $U(\ep)$ admits a presentation
$$ U(\ep)=\sum\limits_{i =1}^{p_*}\alpha_i(\ep)t(i,\ep)(t(i,\ep))^\top,\, \alpha_i(\ep)\geq 0,\, t(i,\ep)\in T_a(\ep),\; i =1,...,p_*, \,
p_*=\frac{p(p+1)}{2}.$$
Then taking into account (\ref{4.1}) (\ref{4.2}), we conclude that  $U(\ep)$ admits a presentation
$U(\ep)=\sum\limits_{s\in S}U(s,\ep)$ where

\centerline{$ U(s,\ep)\in {\cal F}(s,\ep):={\rm cone}\{t\, t^\top, t \in T_a(s,\ep)\}\in \cp,  \; s \in \bar S,\
U(s,\ep)=\mathbb O_{p\times p}, \; s \in S\setminus \bar S. $}

\vspace{2mm}

\noindent It follows from (\ref{6.1}) that by construction, for these matrices, the following equalities hold true:
$$ U_{kq}(s,\ep)=0\ \forall k \in P\setminus P_*(s),\ \forall q\in P, \ \forall s\in S.$$

For $s\in S$, using  matrices $U(s,\ep)\in {\cal CP}(p)$ and  $ U^0(s)\in {\cal CP}(p)$
  let us define   matrices
	$$ W(s,\ep):={\cal A}(U(s,\ep),s)\in {\cal CP}(p(s)),\   W^0(s):={\cal A}(U^0(s),s)\in {\cal CP}(p(s)).$$
 It is evident that relations (\ref{lab-1}) hold true with the matrices $W(s,\ep)$, $s \in S.$

\vspace{2mm}

Now let us show that $\bar S=S$ and  equalities (\ref{lab-0}) hold true. It follows from Assumptions j), jj) and Proposition \ref{P-A}
 that conditions i) and ii) hold true. Hence, for all $s\in S$,  $W(s,\ep)\to W^0(s) \mbox{ as } \ep \to 0$ and
matrix $W^0(s)$ admits a presentation (\ref{W-M}). Then $W(s,\ep)\not=\mathbb O_{p(s)\times p(s)}$ for all $s \in S,$ and consequently
$\bar S=S.$ Moreover, we have shown that
for sufficiently small $\ep_0>0$,
$W(s,\ep)\in {\cal CP}(p(s))\subset {\mathbb S}_+(p(s)) $ $\forall\ \ep\in [0,\ep_0].$  Consequently, it follows from Proposition \ref{P-F} that
for  $s \in S$,  the matrices  $W(s,\ep)$,  $\ep\in [0,\ep_0],$ admit  presentations (\ref{11}).

By assumption, $(X(\ep),U(\ep))\in \mathbb C(\cop)$, hence
\bea& X(s,\ep):={\cal A}(X(\ep),s) \in {\cal COP}(p(s))\  \forall s\in S, \label{lab-10}\\
&X(\ep)\bullet U(\ep)=0  \Longrightarrow\ X(\ep)\bullet U(s,\ep)=0\ \forall s\in S, \Longleftrightarrow\nonumber\\
& X(s,\ep)\bullet W(s,\ep)=0 \ \Longleftrightarrow\
\sum\limits_{i=1}^{n(s)} ({\cal M}_i(s,\ep))^\top  X(s,\ep){\cal M}_i(s,\ep)=0 \ \forall s\in S,\label{13}\eea
where for $s\in S$, ${\cal M}_i(s,\ep)$ is $i$-th column of the matrix ${\cal M}(s,\ep)$.
It follows from the latter equalities and inclusions (\ref{lab-10}) that
\bea&({\cal M}_i(s,\ep))^\top  X(s,\ep){\cal M}_i(s,\ep)=0, \, {\cal M}_i(s,\ep)>0 \ \forall \, i=1,...,n(s) , \, \forall s \in S\Longrightarrow\nonumber\\
& X(s,\ep)\in {\mathbb S}_+(p(s)) \ \forall s\in S.\label{13.1}\eea
Equalities (\ref{lab-0}) follow from (\ref{13}), (\ref{13.1}) and inclusions
$W(s,\ep)\in {\cal CP}(p(s))\subset {\mathbb S}_+(p(s))$ for all $s \in S.$
\end{proof}

\begin{theorem}\label{PP4Z} Suppose that for a pair $(X^0,U^0)\in \mathbb C(\cop)$  Assumptions j)-jjj) hold true.
 Let $X(\ep)\in {\mathbb  S}(p)$ and
$W(s,\ep)\in {\mathbb S}(p(s)),
s\in S,$ be such that
$$X(\ep)\to X^0,\ W(s,\ep)\to W^0(s):={\cal A}(U^0(s),s) \ \mbox{ as } \ep\to 0 \ \forall s\in S$$
where $U^0(s)$, $ s\in S$, are  as in (\ref{1}), and equality (\ref{lab-0}) holds.
Then
for $\ep\in [0,\ep_0],$ where $\ep_0>0$ is a sufficiently small number, the following conditions are fulfilled
\bea& X(\ep) \in \cop,\ W(s,\ep) \in {\cal CP}(p(s))\ \forall s\in S,\label{1zz}\\
&U(\ep):=\sum\limits_{s \in S}{\cal B}(W(s,\ep),s)\in \cp,\ X(\ep)\bullet U(\ep)=0,\label{2zz}\eea
and consequently  $(X(\ep), U(\ep)) \in \mathbb C(\cop)$.
\end{theorem}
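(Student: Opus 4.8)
The plan is to reverse the logic of Theorem~\ref{PP3Z}: there we started from a genuine complementary pair and extracted the matrices $W(s,\ep)$; here we are handed abstract matrices $X(\ep)$, $W(s,\ep)$ that converge to the correct limits and satisfy the Lyapunov-type identity (\ref{lab-0}), and we must show they reassemble into a complementary pair. The crucial structural input is Proposition~\ref{P-A}(iii), which at $\ep=0$ gives $X^0(s):={\cal A}(X^0,s)\in{\mathbb S}_+(p(s))$, $W^0(s)\in{\mathbb S}_+(p(s))$, and $X^0(s)+W^0(s)\in{\rm int}\,{\mathbb S}_+(p(s))$ for every $s\in S$.

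First I would establish (\ref{1zz}). The interior condition $X^0(s)+W^0(s)\in{\rm int}\,{\mathbb S}_+(p(s))$ is open, so for $\ep_0$ small enough $X(s,\ep)+W(s,\ep)\in{\rm int}\,{\mathbb S}_+(p(s))$ for all $\ep\in[0,\ep_0]$ and all $s\in S$, where I write $X(s,\ep):={\cal A}(X(\ep),s)$. Now equation (\ref{lab-0}) says $X(s,\ep)W(s,\ep)+W(s,\ep)X(s,\ep)=\mathbb O$, i.e.\ the two symmetric matrices anticommute. Two anticommuting symmetric matrices are simultaneously block-diagonalizable, and on the common eigenstructure the anticommutation forces their supports to be ''orthogonal'': on each common invariant subspace at most one of the two is nonzero, and whichever is nonzero must have all eigenvalues of one sign dictated by the interior condition on the sum. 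The clean way to see it: let $v$ be an eigenvector of $X(s,\ep)$ with eigenvalue $\lambda$; then $X(s,\ep)(W(s,\ep)v)=-W(s,\ep)X(s,\ep)v=-\lambda W(s,\ep)v$, so $W(s,\ep)v$ lies in the $(-\lambda)$-eigenspace of $X(s,\ep)$. Pairing eigenspaces $\lambda\leftrightarrow-\lambda$ and using that $X(s,\ep)+W(s,\ep)\succ0$ on each such pair, one deduces $\lambda\ge0$ for $X(s,\ep)$ and the corresponding eigenvalue of $W(s,\ep)$ is $\ge0$. Hence $X(s,\ep)\in{\mathbb S}_+(p(s))$ and $W(s,\ep)\in{\mathbb S}_+(p(s))$. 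To upgrade $W(s,\ep)\in{\mathbb S}_+(p(s))$ to $W(s,\ep)\in{\cal CP}(p(s))$ I would invoke Proposition~\ref{P-F}: since $W(s,\ep)\in{\mathbb S}_+(p(s))$ and $W(s,\ep)\to W^0(s)$ with $W^0(s)$ admitting the strictly positive factorization (\ref{W-M}) (which holds by Proposition~\ref{P-A}(ii) under Assumptions j), jj)), Proposition~\ref{P-F} yields a factorization $W(s,\ep)={\cal M}(s,\ep){\cal M}(s,\ep)^\top$ with ${\cal M}(s,\ep)>\mathbb O$, so $W(s,\ep)$ is a sum of rank-one matrices $tt^\top$ with $t>{\bf 0}$, i.e.\ completely positive. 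For the copositivity of $X(\ep)$ itself, I would combine the local structure: by (\ref{6.1})-type reasoning (which here is available because Assumption jjj) gives the separation (\ref{01}) and the strict positivity (\ref{nu})) the zeros of $X(\ep)$ on $T$ near $T_a(s,X^0)$ are supported on $P_*(s)$, and the restriction $X(s,\ep)$ there is PSD by the previous paragraph, while away from every $T_a(s,X^0)$ the quadratic form $t^\top X(\ep)t$ is bounded away from zero by continuity; a compactness argument on $T$ then gives $X(\ep)\in\cop$.

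Next I would prove (\ref{2zz}). That $U(\ep):=\sum_{s\in S}{\cal B}(W(s,\ep),s)\in\cp$ is immediate once each $W(s,\ep)\in{\cal CP}(p(s))$: the map ${\cal B}(\cdot,s)$ just zero-pads indices outside $P_*(s)$, so ${\cal B}(W(s,\ep),s)=\sum_i {\cal B}({\cal M}_i(s,\ep){\cal M}_i(s,\ep)^\top,s)$ is a sum of rank-one completely positive $p\times p$ matrices (the zero-padded vectors are still nonnegative), and $\cp$ is a convex cone closed under such sums. For the complementarity $X(\ep)\bullet U(\ep)=0$, I would compute $X(\ep)\bullet{\cal B}(W(s,\ep),s)$: because ${\cal B}(W(s,\ep),s)$ has nonzero entries only in the $P_*(s)\times P_*(s)$ block, this trace inner product equals ${\cal A}(X(\ep),s)\bullet W(s,\ep)=X(s,\ep)\bullet W(s,\ep)$. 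But $X(s,\ep)\succeq0$, $W(s,\ep)\succeq0$, and they anticommute by (\ref{lab-0}), whence $X(s,\ep)\bullet W(s,\ep)=\tfrac12{\rm trace}(X(s,\ep)W(s,\ep)+W(s,\ep)X(s,\ep))=0$. Summing over $s\in S$ gives $X(\ep)\bullet U(\ep)=0$. Finally, $(X(\ep),U(\ep))\in\mathbb C(\cop)$ follows by definition from $X(\ep)\in\cop$, $U(\ep)\in\cp=(\cop)^*$, and $X(\ep)\bullet U(\ep)=0$.

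I expect the main obstacle to be the copositivity assertion $X(\ep)\in\cop$ in (\ref{1zz}). Positive semidefiniteness of the diagonal blocks $X(s,\ep)$ only controls $t^\top X(\ep)t$ for $t$ supported on $P_*(s)$, whereas copositivity requires $t^\top X(\ep)t\ge0$ for \emph{all} $t\in T$. Bridging this gap is exactly where Assumption jjj) (equivalently the separation (\ref{01}) and the uniform positivity $\nu_{kj}>0$ in (\ref{nu})) must do its work: one has to argue that any $t\in T$ is either close to some $T_a(s,X^0)$---in which case its ''off-$P_*(s)$'' components are small and a perturbation estimate using $\nu_{kj}>0$ keeps $t^\top X(\ep)t\ge0$---or bounded away from every $T_a(s,X^0)$, where $t^\top X^0 t\ge c>0$ by a compactness argument and hence $t^\top X(\ep)t>0$ for $\ep$ small. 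Making this dichotomy quantitative and uniform over the (compact) simplex $T$ is the delicate part; the rest of the proof is bookkeeping with the transformations ${\cal A}(\cdot,s)$, ${\cal B}(\cdot,s)$ and the already-established Propositions~\ref{P-A} and \ref{P-F}.
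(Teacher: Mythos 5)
Your proposal is correct and follows essentially the same route as the paper: positive semidefiniteness of $X(s,\ep)$ and $W(s,\ep)$ is extracted from the anticommutation identity (\ref{lab-0}) together with the interior condition $X(s,\ep)+W(s,\ep)\in{\rm int}\,{\mathbb S}_+(p(s))$ (your eigenvector-pairing argument is precisely the content of the paper's Proposition \ref{P-23-101}), complete positivity of $W(s,\ep)$ comes from Proposition \ref{P-F}, and copositivity of $X(\ep)$ rests on the dichotomy between points of $T$ near some $T_a(s,X^0)$ (controlled by the PSD block $X(s,\ep)$ plus the strict positivity $\nu_{kj}>0$ supplied by Assumption jjj)) and points bounded away from all of them. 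The one step you explicitly leave as a sketch --- making that dichotomy quantitative --- is exactly where the paper concentrates its technical effort, implementing it as a contradiction argument with a minimizing sequence $\tau(\ep_n)$, a limit point $\tau^*\in T_a(s,X^0)$, and an expansion of $\tau(\ep)^\top X(\ep)\tau(\ep)$ relative to the partition $P_*(s)\cup(P\setminus P_*(s))$ normalized by $||\tau_0(\ep)||_1$, in which the linear cross term $2{\tau^*_*}^\top X^0_{*0}\Delta\tau_0\geq\mu(s)>0$ dominates.
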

\begin{proof} Notice that it follows from Assumptions j)-jj)  and Proposition \ref{P-A}  that  conditions i)-iii) hold true.

 First, let us set $ X(s,\ep)={\cal A}(X(\ep),s)$ and   prove  the following relations
\bea&  X(s,\ep)W(s,\ep)=\mathbb O_{p(s)\times p(s)},\label{zzz}\\
&X(s,\ep)\in {\mathbb S}_+(p(s)),\ W(s,\ep)\in {\mathbb S}_+(p(s)) \ \forall s \in S,\ \forall \ep\in [0,\ep_0].\label{26-01-23-3}\eea
By condition iii), we have
$ X^0(s)+  W^0(s)\in {\rm int}\, {\mathbb  S}_+(p(s)) \ \forall s \in S$
and by assumption
$X(s,\ep)\to X^0(s),\   W(s,\ep)\to   W^0(s) \ \forall s \in S \mbox{ as } \ep \to 0.$
This implies    that
$$ X(s,\ep)+  W(s,\ep)\in {\rm int}\, {\mathbb  S}_+(p(s)) \ \forall s \in S, \ \forall \ep\in [0,\ep_0].$$
It follows from these inclusions, equalities (\ref{lab-0}) and Proposition \ref{P-23-101} that  relations  (\ref{zzz}), (\ref{26-01-23-3})
hold true.

\vspace{2mm}

 It follows from condition ii) that for all $s\in S$, matrix $W^0(s)$ admits a presentation (\ref{W-M}), and
 by assumption  $W(s,\ep)\to W^0(s) \mbox{ as } \ep \to 0
$, and  we have shown that
$W(s,\ep)\in {\mathbb  S}_+(p(s)), $ $\ep\in [0,\ep_0].$  Consequently, it follows from Proposition \ref{P-F} that
for any $s \in S$, the matrices  $W(s,\ep)$,  $\ep\in [0,\ep_0],$ admit  presentations (\ref{11}) and hence
the latter inclusions in (\ref{1zz}) hold true.
This implies that\\

\centerline{$U(\ep):=\sum\limits_{s \in S}{\cal B}(W(s,\ep),s)=\sum\limits_{s\in S}U(s,\ep)\in {\cal CP}(p)
 \mbox{ with } U(s,\ep):={\cal B}(W(s,\ep),s)\ \forall s\in S.$}

Notice that due to specific structure of matrix $U(s,\ep)$ we have that
$X(\ep)\bullet U(s,\ep)=X(s,\ep)\bullet W(s,\ep)$ for all $s \in S.$
Also notice that it was proved above (see (\ref{zzz})) that $X(s,\ep)\bullet W(s,\ep)=0$ for all $s \in S.$
 Taking into account these observations, we conclude that
$X(\ep)\bullet U(\ep)=0$ and relations (\ref{2zz}) are proved.

\vspace{2mm}

 Now let us show that $X(\ep)\in{\cal COP}(p)$ for sufficiently small $\ep\geq 0.$
Denote
$$\tau(\ep):={\rm arg}\{\min \tau^\top X(\ep)\tau, \ \mbox{ s.t. } \tau\in T\}.$$
 Suppose that  there exists a sequence   $\ep_n,$ $n=1,2,...,$ $\ep_n\to 0$ as $n\to \infty,$ such that  $X(\ep_n)\not \in \cop,$ hence
 \be \tau(\ep_n)^\top X(\ep_n)\tau(\ep_n)<0.\label{O100}\ee

 To simplify the presentation, we will denote $\ep_n$ by $\ep$ and suppose that $\ep\to 0.$

 Let $\tau^*$ be a limit point of the sequence $\tau(\ep)$ as $\ep \to 0.$
Hence,  we obtain that
 $$0\leq {\tau^*}^\top X^0\tau^*=\lim\limits_{\ep \to 0}\tau(\ep)^\top X(\ep)\tau(\ep)\leq 0 \ \Longrightarrow\  {\tau^*}^\top X^0\tau^*=0.$$
 This implies that $\tau^*\in T_a(s,X^0)$ for some $s \in S.$ Below we will consider this fixed $s$.

 Let us partition vectors $\tau(\ep)=(\tau_k(\ep), k \in P)^\top$ and $\tau^*=(\tau^*_k, k \in P)^\top$,
and matrices $X(\ep),$ $\Delta X(\ep):= X(\ep)-X^0$ and  $X^0$ w.r.t. the  partition  $P_*(s)\cup(P\setminus P_*(s))$ of the set $P$ as follows
 $$\tau_*(\ep):=(\tau_k(\ep), k \in P_*(s)),\ \tau_0(\ep):=(\tau_k(\ep), k \in P\setminus P_*(s)), \ \tau^*_*:=(\tau_k^*, k \in P_*(s)),$$
\bea&X(\ep)=\left(\begin{array}{cc}
 X_*(\ep)& X_{*0}(\ep)\cr
 (X_{*0}(\ep))^\top& X_0(\ep)
 \end{array}\right),
\Delta  X(\ep)=\left(\begin{array}{cc}
 \Delta X_*(\ep)& \Delta X_{*0}(\ep)\cr
 (\Delta X_{*0}(\ep))^\top& \Delta X_0(\ep)
 \end{array}\right),\nonumber\eea
 \bea &X^0=\left(\begin{array}{cc}
 X^0_*& X^0_{*0}\cr
 (X^0_{*0})^\top& X^0_0
 \end{array}\right).\nonumber\eea
Then
 $$\tau(\ep)=\left(\begin{array}{c}
 \tau_*(\ep)\cr
 \tau_0(\ep)\end{array}\right), \tau^*=\left(\begin{array}{c}
 \tau^*_*\cr
 \mathbf 0\end{array}\right), \ \tau_*(\ep) \to \tau^*_*,\ \tau_0(\ep) \to \mathbf 0 \mbox{ as } \ep \to 0.$$
Notice that here $X_*(\ep)=X(s,\ep)$ with $s$ under consideration.
Using this notation, let us calculate
\bea &\tau(\ep)^\top X(\ep)\tau(\ep)=\left(\begin{array}{c}
\tau_*(\ep)\cr
\mathbf 0
\end{array}\right)^\top X(\ep)\left(\begin{array}{c}
\tau_*(\ep)\cr
\mathbf 0
\end{array}\right)+\nonumber\\
&
2\left(\begin{array}{c}
\tau_*(\ep)\cr
\mathbf 0
\end{array}\right)^\top
X(\ep)\left(\begin{array}{c}
\mathbf 0\cr
\tau_0(\ep)
\end{array}\right)+\left(\begin{array}{c}
\mathbf 0\cr
\tau_0(\ep)
\end{array}\right)^\top X(\ep) \left(\begin{array}{c}
\mathbf 0\cr
\tau_0(\ep)
\end{array}\right)=\nonumber\\
&{\tau_*(\ep)}^\top X_*(\ep)\tau_*(\ep)+2(\tau^*_*+\Delta \tau_*(\ep))^\top (X^0_{*0}+
\Delta X_{*0}(\ep))\tau_0(\ep)+{\tau_0(\ep)}^\top X_0(\ep){\tau_0(\ep)}=\nonumber\\
&{\tau_*(\ep)}^\top X_*(\ep)\tau_*(\ep)+{\tau_0(\ep)}^\top X_0(\ep){\tau_0(\ep)}+\nonumber\\
&2{\tau^*_*}^\top X^0_{*0}\tau_0(\ep)+2{\tau^*_*}^\top\Delta X_{*0}(\ep) \tau_0(\ep)+2\Delta \tau_*(\ep)^\top X^0_{*0}\tau_0(\ep)+
2\Delta \tau_*(\ep)^\top\Delta X_{*0}(\ep) \tau_0(\ep),\nonumber\eea
where $\Delta \tau_*(\ep)=\tau_*(\ep)-\tau^*_*.$

If $ \tau_0(\ep)\equiv 0$ for all  $\ep\to 0, $ then, taking into account that  $X_*(\ep)=X(s,\ep)\in {\mathbb S}_+(p(s))$ (see  (\ref{26-01-23-3})), we have
$\tau(\ep)^\top X(\ep)\tau(\ep)=\tau_*(\ep)^\top X_*(\ep)\tau_*(\ep)\geq 0$.
But this contradicts (\ref{O100}).

Now suppose that $ \tau_0(\ep)\not=0$. Hence, there exists
$$\Delta \tau_0=\lim\limits_{\ep\to 0} \frac{\tau_0(\ep)}{||\tau_0(\ep)||_1}, \ \Delta \tau_0\geq {\bf 0},\
 \sum\limits_{k \in P\setminus P_*(s)}\Delta \tau_{0k}=1.$$
Notice that
$  \tau_0(\ep) \to \mathbf 0,\ \Delta \tau_*(\ep)\to  \mathbf 0,\ \Delta X(\ep)\to \mathbb O_{p\times p}\ \mbox{ as }\ep\to 0.$
Then
$$\lim\limits_{\ep\to 0} \frac{\tau(\ep)^\top X(\ep)\tau(\ep)}{||\tau_0(\ep)||_1}=
\lim\limits_{\ep\to 0} \frac{\tau_*(\ep)^\top X_*(\ep)\tau_*(\ep)}{||\tau_0(\ep)||_1}+2{\tau^*_*}^\top X^0_{*0}\Delta \tau_0.$$
Since  $X_*(\ep)=X(s,\ep)\in {\mathbb S}_+(p(s))$, we have
$\lim\limits_{\ep\to 0} \frac{\tau_*(\ep)^\top X_*(\ep)\tau_*(\ep)}{||\tau_0(\ep)||_1}\geq 0$,  and
$${\tau^*_*}^\top X^0_{*0}\Delta \tau_0=
\sum\limits_{k \in P\setminus P_*(s)}\Delta \tau_{0k}e^\top_kX^0\tau^*\geq \mu(s)\sum\limits_{k \in P\setminus P_*(s)}\Delta \tau_{0k}
=\mu(s),$$
where $\mu(s):=\min\{\nu_{kj}, \; k \in P\setminus P_*(s),\; j \in J(s)\} >0$, $ \nu_{kj}$ are defined in (\ref{nu}).

Hence we obtain that $\tau(\ep)^\top X^0(\ep)\tau(\ep)\geq 0$ for sufficiently small $\ep\geq 0$, but this contradicts (\ref{O100}). Thus, we have proved that
$X^0(\ep)\in \cop$ for sufficiently small $\ep\geq 0.$
 \end{proof}

\begin{theorem}\label{PP3} Suppose that for a pair $(X^0,U^0)\in \mathbb C(\cop)$  Assumptions j)-jjj) hold true.
Then there exists $\ep_0>0$
such that
a pair $( X, U)\in {\mathbb S}(p)\times  {\mathbb S}(p)$, $||( X, U)-(X^0,U^0)||\leq \ep_0$, belongs to the complementarity set
$\mathbb C(\cop)$ iff there exist matrices $ W(s)\in  {\mathbb S}(p(s)),$ $ s \in S,$
such that
\bea & {\cal A}(X,s)W(s)+W(s){\cal A}(X,s)=\mathbb O_{p(s)\times p(s)}\ \forall s\in S,\label{def-eq}\\
 & U=\sum\limits_{s\in S} {\cal B}(W(s),s).\label{def-eq1}\eea
\end{theorem}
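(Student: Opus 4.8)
The statement is essentially the combination of Theorems \ref{PP3Z} and \ref{PP4Z}, packaged as an ``iff'' description of $\mathbb C(\cop)$ in a neighborhood of $(X^0,U^0)$. So the proof should be short: I would argue the two implications separately, choosing $\ep_0$ small enough that both halves apply.

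\emph{Necessity (the ``only if'' direction).}
Suppose $(X,U)\in\mathbb C(\cop)$ with $\|(X,U)-(X^0,U^0)\|\le\ep_0$. I want to apply Theorem \ref{PP3Z}, but that theorem is phrased for a convergent sequence $(X(\ep),U(\ep))\to(X^0,U^0)$ rather than for a single nearby point. The clean way around this is the standard ``no uniform $\ep_0$ $\Rightarrow$ extract a sequence'' argument: if no $\ep_0>0$ worked for necessity, there would be a sequence $(X_k,U_k)\in\mathbb C(\cop)$ with $(X_k,U_k)\to(X^0,U^0)$ admitting no matrices $W(s)$ satisfying (\ref{def-eq})--(\ref{def-eq1}); setting $X(\ep_k):=X_k$, $U(\ep_k):=U_k$ contradicts Theorem \ref{PP3Z}, which produces exactly such $W(s,\ep_k)\in{\mathbb S}(p(s))$ satisfying (\ref{lab-0}) and (\ref{lab-1}), i.e.\ (\ref{def-eq}) and (\ref{def-eq1}) with $W(s):=W(s,\ep_k)$. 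Hence some $\ep_0^{(1)}>0$ works for this direction.

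\emph{Sufficiency (the ``if'' direction).}
Conversely, suppose $(X,U)$ with $\|(X,U)-(X^0,U^0)\|\le\ep_0$ is such that there exist $W(s)\in{\mathbb S}(p(s))$, $s\in S$, satisfying (\ref{def-eq})--(\ref{def-eq1}). Here I need the hypotheses of Theorem \ref{PP4Z}, namely $X\to X^0$ and $W(s)\to W^0(s)={\cal A}(U^0(s),s)$. Closeness of $X$ to $X^0$ is immediate; the subtle point is that (\ref{def-eq})--(\ref{def-eq1}) plus $U$ close to $U^0$ must \emph{force} each $W(s)$ close to $W^0(s)$. This is where condition i) of Proposition \ref{P-A} enters: under Assumptions j)--jj) the decomposition (\ref{1}) of $U^0$ into the summands $U^0(s)\in{\cal F}(s)$ is \emph{unique}, and moreover ${\cal B}(\cdot,s)$ is injective on ${\mathbb S}(p(s))$ with ${\cal A}\circ{\cal B}=\mathrm{id}$, so $W(s)={\cal A}(U(s),s)$ where $U(s):={\cal B}(W(s),s)$ and $\sum_s U(s)=U$; because the matrices ${\cal B}(W(s),s)$ have disjoint nonzero row/column supports $P_*(s)$ (recall (\ref{1.10}) and the structure of ${\cal B}$), reading off $U$ in the block indexed by $P_*(s)$ recovers $W(s)$ as a continuous (indeed linear, restricted to the relevant support pattern) function of $U$; hence $U\to U^0$ yields $W(s)\to W^0(s)$. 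With that in hand, Theorem \ref{PP4Z} applies and gives $X\in\cop$, $W(s)\in{\cal CP}(p(s))$, $U=\sum_s{\cal B}(W(s),s)\in\cp$, and $X\bullet U=0$, i.e.\ $(X,U)\in\mathbb C(\cop)$. Some $\ep_0^{(2)}>0$ thus works for this direction; take $\ep_0:=\min\{\ep_0^{(1)},\ep_0^{(2)}\}$.

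\emph{Main obstacle.}
The only genuine subtlety is the continuity/uniqueness point in the sufficiency direction: turning ``$U$ near $U^0$ and (\ref{def-eq})--(\ref{def-eq1}) hold'' into ``$W(s)$ near $W^0(s)$.'' Without the uniqueness of the decomposition (\ref{1}) — guaranteed by Proposition \ref{P-A}(i) under Assumptions j)--jj) — a nearby $U$ could in principle be split into summands $U(s)$ that are \emph{not} near the $U^0(s)$, and then Theorem \ref{PP4Z} could not be invoked. Once one observes that the supports $P_*(s)$, $s\in S$, are the fixed nonzero blocks of the matrices ${\cal B}(W(s),s)$ (so the splitting in (\ref{def-eq1}) is forced by the support pattern and equals the unique one from (\ref{1})), the map $U\mapsto(W(s))_{s\in S}$ is manifestly continuous at $U^0$ and the argument closes. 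Everything else is bookkeeping: reconciling the sequence-based phrasing of Theorems \ref{PP3Z}--\ref{PP4Z} with the neighborhood-based phrasing here, and intersecting the two radii.
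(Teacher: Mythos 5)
Your overall strategy---necessity from Theorem \ref{PP3Z} via a sequence-extraction argument, sufficiency from Theorem \ref{PP4Z}, then intersect the two radii---is exactly the route the paper takes (its proof is the one-line ``follows from Theorems \ref{PP3Z} and \ref{PP4Z}''), and you are right that the only real work is in bridging the sequence-based hypotheses of Theorem \ref{PP4Z} with the pointwise statement here. However, your resolution of that bridge rests on a false claim. You assert that the matrices ${\cal B}(W(s),s)$, $s\in S$, have disjoint supports $P_*(s)\times P_*(s)$, so that each $W(s)$ can be read off from the corresponding block of $U$ and therefore depends continuously on $U$. The sets $P_*(s)$ need not be disjoint: in the paper's own example at the end of Section \ref{S-4} one has $P_*(1)=\{1,2\}$ and $P_*(2)=\{2,3\}$ (with all of Assumptions j)--jjj) satisfied), and the $(2,2)$ entry of $U$ equals $w_{22}(1)+w_{22}(2)$, which cannot be split knowing $U$ alone. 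Equation (\ref{1.10}) only bounds the support of each summand; it does not separate the summands across $s$. Consequently the map $U\mapsto(W(s))_{s\in S}$ is not determined by the support pattern, equation (\ref{def-eq1}) alone does not pin down the $W(s)$, and your continuity argument ``$U\to U^0$ yields $W(s)\to W^0(s)$'' is not established.

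This matters because Theorem \ref{PP4Z} genuinely requires $W(s,\ep)\to W^0(s)$: a family $W(s)$ merely satisfying (\ref{def-eq})--(\ref{def-eq1}) with $U$ near $U^0$ could a priori differ substantially from $W^0(s)$ on the entries indexed by overlaps of the $P_*(s)$, and then Theorem \ref{PP4Z} says nothing about it. To close the gap one must exploit equation (\ref{def-eq}) itself, not only (\ref{def-eq1}): for $X$ near $X^0$ the solution space of ${\cal A}(X,s)W+W{\cal A}(X,s)=\mathbb O_{p(s)\times p(s)}$ is controlled by the spectral structure of ${\cal A}(X,s)$ near ${\cal A}(X^0,s)\in{\mathbb S}_+(p(s))$, and it is this constraint, combined with statement i) of Proposition \ref{P-A} and with (\ref{def-eq1}), that should force $W(s)$ close to $W^0(s)$ (e.g.\ again by a compactness/contradiction argument on sequences). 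Without some argument of this kind the ``if'' direction is incomplete. Your necessity direction is fine as written.
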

{\it {\bf Proof} } follows from Theorems \ref{PP3Z} and \ref{PP4Z}.

\vspace{2mm}

Thus we showed that in a neighborhood  of a given point $(X^0,U^0)\in \mathbb C(\cop)$ satisfying Assumptions j)-jjj),
 the complementarity set $\mathbb C(\cop)$
is uniquely characterized by system of bi-linear (\ref{def-eq}) and linear (\ref{def-eq1}) equations.
It is natural to call this system as a system of defining equations for
cone  $\mathbb C(\cop)$ .

Note that this system of defining equations itself is uniquely defined by
the sets $P_*(s),$ $s \in S,$ which in turn are uniquely determined by the matrix $X^0$.

\vspace{2mm}

Above we supposed that $T_a(X^0)\not=\emptyset.$ Let us analyze the case when $T_a(X^0)=\emptyset$, and consequently $X^0\in {\rm int}\,\cop.$
It is evident that if $X\in {\rm int}\,\cop,$ then condition  $(X,U)\in \mathbb C(\cop)$ implies the equality $U=\mathbb O_{p\times p},$ and
there exists $\ep_0>0$ such that the inequality $||X-X^0||\leq \ep_0$ implies the inclusion $X\in {\rm int}\,\cop.$
Hence, if $T_a(X^0)=\emptyset$, then there exists $\ep_0>0$ such that Theorem \ref{PP3} holds true with $U=U^0=\mathbb O_{p\times p}$ and
all $X\in \cop$ satisfying the inequality $||X-X^0||\leq \ep_0$.

\vspace{2mm}

Let us end this section with a small example illustrating Theorem \ref{PP3}.

Set $ a=(1,-1, 1)^\top$, $b=(1,1,0)^\top,\ c=(0,1 ,1)^\top, $ and
consider a pair $(X^0,U^0)\in {\mathbb C}({\cal COP}(3))$ where
$X^0=\left(\begin{array}{rrr}
0&0&1\cr
0&0&0\cr
1&0&0\end{array}\right)+a\,a^\top\in {\cal COP}(3),\ U^0=bb^\top+cc^\top\in {\cal CP}(3).$

For these matrices, we have $T_a(X^0)=\{\tau(1)=b/2, \tau(2)=c/2\},$ $J=\{1,2\}$, $S=\{1,2\}$,
$J(1)=\{1\},$ $P_*(1)=\{1,2\}= M(1)$,  $J(2)=\{2\},$ $P_*(2)=\{2,3\}=M(2)$, $U^0(1)=bb^\top,$ $U^0(2)=cc^\top.$
One can check that for $(X^0,U^0)$ all Assumptions j)-jjj) are fulfilled.

Using the sets $P_*(1)$ and $P_*(2)$,  let us form the system of defining equations (\ref{def-eq}). For this example the system has the form
\bea& \left(\!\begin{array}{cc}
x_{11}& x_{12}\cr
x_{12}& x_{22}
\end{array}\!\right)\!\!\left(\!\begin{array}{cc}
w_{11}(1)& w_{12}(1)\cr
w_{12}(1)& w_{22}(1)
\end{array}\!\right)\!+\!\left(\!\begin{array}{cc}
w_{11}(1)& w_{12}(1)\cr
w_{12}(1)& w_{22}(1)
\end{array}\!\right)\!\!\left(\!\begin{array}{cc}
x_{11}& x_{12}\cr
x_{12}& x_{22}
\end{array}\!\right)\!=\mathbb O_{2\times 2},\label{sys1}\\
&\left(\!\begin{array}{cc}
 x_{22}& x_{23}\cr
 x_{23}& x_{33}
\end{array}\!\right)\!\!\left(\!\begin{array}{cc}
 w_{22}(2)& w_{23}(2)\cr
 w_{23}(2)& w_{33}(2)
\end{array}\!\right)\!+\!
\left(\!\begin{array}{cc}
 w_{22}(2)& w_{23}(2)\cr
 w_{23}(2)& w_{33}(2)
\end{array}\!\right)\!\!\left(\!\begin{array}{cc}
 x_{22}& x_{23}\cr
 x_{23}& x_{33}
\end{array}\!\right)\!=\mathbb O_{2\times 2}.\label{sys2}\eea
It is a system w.r.t. variables

\vspace{1mm}

$\qquad x_{11},\, x_{12},\,x_{22},\,x_{23},\,x_{33},\,w_{11}(1),\,w_{12}(1),\,w_{22}(1),\,w_{22}(2),\,w_{23}(2),\,w_{33}(2).$

\vspace{2mm}

According to Theorem \ref{PP3},
there exists $\ep_0>0$
such that
a pair $( X, U)\in {\mathbb S}(3)\times  {\mathbb S}(3)$, $||( X, U)-(X^0,U^0)||\leq \ep_0$, belongs to the complementarity set
$\mathbb C({\cal COP}(3))$ iff these matrices $X$ and $U$ have the forms
$$X=\left(\begin{array}{ccc}
x^*_{11}& x^*_{12}& x_{13}\cr
x^*_{12}& x^*_{22}&x^*_{23}\cr
x_{13}&x^*_{23}& x^*_{33}
\end{array}\right),
U=\left(\begin{array}{ccc}
w^*_{11}(1)& w^*_{12}(1)& 0\cr
w^*_{12}(1)& w^*_{22}(1)+w^*_{22}(2)&w^*_{23}(2)\cr
0&w^*_{23}(2)& w^*_{33}(2)
\end{array}\right)$$
where
$(x^*_{11},\, x^*_{12},\,x^*_{22},\,x^*_{23},\,x^*_{33},\,w^*_{11}(1),\,w^*_{12}(1),\,w^*_{22}(1),\,w^*_{22}(2),\,w^*_{23}(2),\,w^*_{33}(2))$
is a solution to system (\ref{sys1}), (\ref{sys2}).

\section{Independence of bi-linear functions forming the system of defining equations}\label{S-6}
In Section \ref{S-4}, we obtained a system of defining equations (\ref{def-eq}), (\ref{def-eq1}), which allows one to uniquely characterize the complementarity set $\mathbb C(\cop)$
in a neighborhood  of a given point $(X^0,U^0)\in \mathbb C(\cop)$ satisfying Assumptions j)-jjj).
In this system, the main role play equations (\ref{def-eq}) since equations (\ref{def-eq1}) are linear and  needed only for defining matrix $U$ using
matrices $W(s)$, $s \in S,$ satisfying  (\ref{def-eq}). In this section, we show that Jacobian matrix of the system of equations (\ref{def-eq}), calculated at
point $(X^0,U^0)\in \mathbb C(\cop)$, has full row rank.

Given  a pair $(X^0,U^0)\in \mathbb C(\cop)$ and a presentation of $U^0$ in the form (\ref{1}),
let us denote $p_*(s)=p(s)(p(s))+1)/2, s \in S,$ $p_*=p(p+1)/2,$ $m=\sum\limits_{s\in S}p_*(s).$
Using  $X\in {\mathbb S}(p)$ and $W(s)\in  {\mathbb S}(p(s)),$ $ s \in S$, as parameters, let us form  vectors and {\bf bi-linear}  vector-functions
\be z=\begin{pmatrix}
{\rm svec}(X)\cr
{\rm svec}(W(s))\cr
s \in S\end{pmatrix}\in \mathbb R^{p_*+m},\  z^0=\begin{pmatrix}
{\rm svec}(X^0)\cr
{\rm svec}({\cal A}(U^0(s),s))\cr
s \in S\end{pmatrix}\in \mathbb R^{p_*+m},\label{zz}\ee
\bea &\Omega(z,s):={\rm svec}({\cal A}(X,s)W(s)+W(s){\cal A}(X,s))\in \mathbb R^{p_*(s)}, s \in S,\nonumber\\
&\Omega(z):=(\Omega^\top (z,s), s \in S)^\top\in \mathbb R^m.\nonumber\eea

It is easy to see that system (\ref{def-eq}) can be rewritten as
$\Omega(z)={\bf 0}.$

\begin{proposition}\label{PP10} Suppose that for a pair $(X^0,U^0)\in \mathbb C(\cop)$ Assumption jj)
and condition iii) from Proposition \ref{P-A}  are satisfied.
Let $U^0(s),s \in S,$ be matrices satisfying  (\ref{1}), vector $z^0$ and function $\Omega(z)$ be as defined above.
Then the matrix $\frac{\partial\,\Omega(z^0)}{\partial z}\in \mathbb R^{m\times(p_*+m)}$ has full rank:
${\rm rank}\frac{\partial\,\Omega(z^0)}{\partial z}\in \mathbb R^{m\times(p_*+m)}=m.$
\end{proposition}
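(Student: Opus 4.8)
The plan is to show that the only vector $\Delta z = (\mathrm{svec}(\Delta X); \mathrm{svec}(\Delta W(s)), s\in S)$ in the left kernel is zero, equivalently that the linear map $\Delta z \mapsto \big(D\Omega(z^0,s)[\Delta z]\big)_{s\in S}$ is surjective onto $\mathbb R^m$. Since the $s$-th block $\Omega(z,s)$ depends only on $\mathcal A(X,s)$ and $W(s)$, and since by Assumption jjj the index sets $P_*(s)$ are the blocks $M(j)$, one first reduces to proving surjectivity block by block: for a single fixed $s$, letting $X^0(s)=\mathcal A(X^0,s)$ and $W^0(s)=\mathcal A(U^0(s),s)$, the linearization at $(X^0(s),W^0(s))$ of the map $(\widetilde X,\widetilde W)\mapsto \widetilde X\,W^0(s)+W^0(s)\,\widetilde X + X^0(s)\,\widetilde W+\widetilde W\,X^0(s)$ should be surjective onto $\mathbb S(p(s))$. (The cross-block coupling through $\mathrm{svec}(X)$ only helps: once each block is individually surjective in its own $\widetilde W(s)$ variable — which I explain below — the full Jacobian is surjective regardless of how the shared $X$-variable is distributed, because we can realize any target in block $s$ using $\widetilde W(s)$ alone and take $\widetilde X=\mathbb O$.)

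So the crux is the following local statement for one fixed $s$: with $A:=X^0(s)\in \mathbb S_+(p(s))$ and $B:=W^0(s)\in \mathbb S_+(p(s))$ satisfying $AB=\mathbb O$ (by Theorem~\ref{PP3Z}/Proposition~\ref{P-A} we have $AB+BA=\mathbb O$ and $A,B\succeq 0$, hence $AB=BA=\mathbb O$) and $A+B\in \mathrm{int}\,\mathbb S_+(p(s))$ (condition iii of Proposition~\ref{P-A}), the map $\widetilde W\mapsto A\widetilde W+\widetilde W A$ from $\mathbb S(p(s))$ to itself has a range that, together with the range of $\widetilde X\mapsto \widetilde X B+B\widetilde X$, spans all of $\mathbb S(p(s))$. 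I would prove this by passing to an orthonormal eigenbasis that simultaneously diagonalizes $A$ and $B$: since $AB=\mathbb O$, $A$ and $B$ commute and their ranges are orthogonal complementary subspaces (using $A+B\succ 0$), so in a suitable basis $A=\mathrm{diag}(a_1,\dots,a_r,0,\dots,0)$ with $a_i>0$ and $B=\mathrm{diag}(0,\dots,0,b_{r+1},\dots,b_{p(s)})$ with $b_j>0$. In this basis the operator $\widetilde W\mapsto A\widetilde W+\widetilde W A$ acts on the $(k,\ell)$ entry by multiplication by $(a_k+a_\ell)$ (with $a_k=0$ for $k>r$), so its range is exactly the span of the symmetric "elementary" matrices $E_{k\ell}$ with $k\le r$ or $\ell\le r$, i.e. everything except the lower-right $B$-block; symmetrically, $\widetilde X\mapsto \widetilde X B+B\widetilde X$ has range containing the lower-right $B$-block (entries $(k,\ell)$ with $k,\ell>r$ get multiplied by $b_k+b_\ell>0$). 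The two ranges together span $\mathbb S(p(s))$, which is the claim.

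Assembling: for each $s$, any target $G(s)\in\mathbb S(p(s))$ can be hit by choosing $\widetilde W(s)$ to produce its "off-$B$-block" part and a contribution to $\widetilde X$ supported on the $P_*(s)$ rows/columns to produce the "$B$-block" part; since the blocks $P_*(s)$ may overlap, one must check that the $\widetilde X$-contributions needed for different $s$ can be superimposed consistently — but this is immediate because surjectivity already holds using $\widetilde W(s)$ alone for the part of $G(s)$ that is not annihilated, and the remaining part lies in the range of the $B$-action; more simply, since the map is linear one argues kernel-side: if $\Delta z$ is in the left kernel then for every $s$, $\mathcal A(\Delta X,s)\,W^0(s)+W^0(s)\,\mathcal A(\Delta X,s) + X^0(s)\,\Delta W(s)+\Delta W(s)\,X^0(s)=\mathbb O$; projecting onto the complement of the $B$-block forces $\mathcal A(\Delta X,s)\,W^0(s)+W^0(s)\,\mathcal A(\Delta X,s)=\mathbb O$ there and hence (by the entrywise multipliers $a_k+a_\ell$) $\Delta W(s)$ vanishes off the $B$-block; projecting onto the $B$-block and using that $X^0(s)$ restricted there is $0$ forces $\Delta W(s)$ restricted to the $B$-block to vanish as well; so $\Delta W(s)=\mathbb O$ for all $s$, and then $\mathcal A(\Delta X,s)\,W^0(s)+W^0(s)\,\mathcal A(\Delta X,s)=\mathbb O$ gives $\mathcal A(\Delta X,s)$ supported only on the $B$-block, i.e. $\mathcal A(\Delta X,s)$ is annihilated column-by-column by $W^0(s)$. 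The main obstacle is exactly this last step: concluding $\Delta X$ itself (not just its $P_*(s)$-blocks) must be consistent with being in the kernel — but here one does not need $\Delta X=\mathbb O$; the proposition only asserts full row rank $m$, and since $\Delta W(s)=\mathbb O$ for all $s$ already forces $\Omega$ restricted to $\Delta z$ to vanish trivially, the rank statement follows once we exhibit, for each $s$, enough independent rows among the $p_*(s)$ equations of block $s$ — which the diagonalization argument provides since the map $\widetilde W(s)\mapsto A\widetilde W(s)+\widetilde W(s)A$ restricted to the off-$B$-block is an isomorphism and hence already gives $p_*(s) - \binom{p(s)-r_s+1}{2}$ independent rows, with the remaining $\binom{p(s)-r_s+1}{2}$ supplied by the $B$-block action of $\widetilde X$, using Assumption~jj to guarantee these $X$-directions are genuinely available and not cancelled across blocks.
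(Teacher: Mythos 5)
Your single-block computation is sound: with $A=X^0(s)$ and $B=W^0(s)$ simultaneously diagonalized (using $AB=\mathbb O$ and $A+B\succ 0$), the operator $\widetilde W\mapsto A\widetilde W+\widetilde W A$ has range spanned by the entries with $a_k+a_\ell>0$, the operator $\widetilde V\mapsto \widetilde V B+B\widetilde V$ covers the complementary lower-right block, and together they are surjective onto $\mathbb S(p(s))$ for each fixed $s$. The genuine gap is in the assembly across blocks, which is exactly where Assumption jj) must do its work and where your argument stops. Your parenthetical claim that "we can realize any target in block $s$ using $\widetilde W(s)$ alone and take $\widetilde X=\mathbb O$" contradicts your own computation: the range of $\widetilde W\mapsto A\widetilde W+\widetilde W A$ misses the whole lower-right block, and that block is never empty because $X^0(s)$ annihilates a strictly positive vector. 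So the $\widetilde X$-variable is indispensable, and since the sets $P_*(s)$ may overlap, the same entries of $\widetilde X$ feed into several blocks simultaneously. To finish, one must show that arbitrary prescribed lower-right-block parts of $\mathcal A(\widetilde X,s)$, $s\in S$, can be realized by a single $\widetilde X$; dually, that a relation $\sum_{s\in S}\mathcal B(Z(s),s)=\mathbb O_{p\times p}$ with each $Z(s)$ symmetric and supported on $\ker X^0(s)$ forces every $Z(s)=\mathbb O$. That step requires identifying $\ker X^0(s)$ with ${\rm span}\{\tau_*(j),\,j\in J_b(s)\}$ and symmetric matrices supported on it with ${\rm span}\{(\tau_*(i)+\tau_*(j))(\tau_*(i)+\tau_*(j))^\top,\,(i,j)\in V(J_b(s))\}$ (Propositions \ref{P01} and \ref{PPA2}), and only then does the linear independence in Assumption jj) apply. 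Your proposal asserts this in its final clause but never carries it out; as written, Assumption jj) is never actually used.

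A second problem is that your "kernel-side" paragraph works with the wrong kernel. The vector $\Delta z$ lives in the domain $\mathbb R^{p_*+m}$, so $D\Omega(z^0)[\Delta z]=0$ is the right-kernel condition; it says nothing about row rank, since the right kernel has dimension at least $p_*$ even when the rank equals $m$. A left-kernel element is a tuple $Y(s)\in\mathbb S(p(s))$, $s\in S$ (as in the paper's proof), and the resulting conditions are $X^0(s)Y(s)+Y(s)X^0(s)=\mathbb O$ for all $s$ together with $\sum_{s\in S}\mathcal B(W^0(s)Y(s)+Y(s)W^0(s),s)=\mathbb O$, which are not the equations you analyze. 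Moreover, even read as a right-kernel computation, the step "projecting onto the complement of the $B$-block forces $\dots$ $\Delta W(s)$ vanishes off the $B$-block" fails on the off-diagonal entries $k\le r<\ell$: there both terms contribute, giving $b_\ell V_{k\ell}+a_k\Delta W_{k\ell}=0$, which forces neither to vanish. The surjectivity route you started is viable, but the cross-block independence supplied by Assumption jj) is the heart of the proposition and is missing.
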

\begin{proof} For given $M,N\in \mathbb R^{p\times p}$, denote by $M\otimes_s
 N$ a symmetric Kronecker product of these matrices,
see \cite{alizadeh1998primal} for exact definition and properties.
Notice that $M\otimes_s N$ is $p_*\times p_*$ matrix, $M\otimes_s N=N\otimes_s M$,  it is symmetric if $M$ and $N$ are symmetric,
 and for $ X,\,U\, \in {\mathbb S}(p)$ we have
\be (X\otimes_s E(p)){\rm svec}(U)=\frac{1}{2}{\rm svec}(XU+UX),\label{kr}\ee
where $E(p)$ is identity $p\times p$ matrix.
Denote
$$X^0(s):={\cal A}(X^0,s)\in {\mathbb  S}(p(s)),\  W^0(s):={\cal A}(U^0(s),s)\in{\mathbb S}(p(s)), \ s\in S,$$
$${\cal K}(s):= W^0(s)\otimes_s E(p(s))\in {\mathbb S}(p_*(s)),\ {\cal L}(s):= X^0(s)\otimes_s E(p(s))\in {\mathbb S}(p_*(s)),\ \ s\in S.$$
Let ${\cal K}_{ij}(s)\in \mathbb R^{p_*(s)},$ $(i,j)\in V(P_*(s)),$ be columns of the matrix ${\cal K}(s)$ for $s \in S, $ and let $S=\{1,...,s_*\}.$
Then the matrix $\frac{\partial\,\Omega(z^0)}{\partial z}$ has the form
$$\frac{\partial\,\Omega(z^0)}{\partial z}=
\begin{pmatrix}
{\cal Q}(1) & {\cal L}(1)& \mathbb O&\mathbb O&...&\mathbb O\cr
{\cal Q}(2) & \mathbb O & {\cal L}(2)&\mathbb O&...&\mathbb O\cr
...&...&...&...&...&...\cr
{\cal Q}(s_*) & \mathbb O &\mathbb O&\mathbb O&...& {\cal L}(s_*)\end{pmatrix}$$
where for $s \in S,$ ${\cal Q}(s)$ is $p_*(s)\times p_*$ matrix with columns ${\cal Q}_{ij}(s)$, $(i,j)\in V(P):$
$${\cal Q}_{ij}(s)={\bf 0}\ \forall (i,j)\in V(P)\setminus V(P_*(s)),\ {\cal Q}_{ij}(s)=
{\cal K}_{ij}(s) \ \forall (i,j)\in V(P_*(s)).$$

Suppose that ${\rm rank}\frac{\partial\,\Omega(z^0)}{\partial z}\in \mathbb R^{m\times(p_*+m)}<m.$ Then there exists a set of matrices
$Y(s)\in {\mathbb S}(p(s)), \ s \in S,$
such that $\sum\limits_{s\in S}||Y(s)||>0$ and
$\begin{pmatrix}
{\rm svec}(Y(s))\cr
s\in S\end{pmatrix}^\top \frac{\partial\,\Omega(z^0)}{\partial z}={\bf 0}^\top.$ The latter equality implies the equalities
$$\sum\limits_{s\in S}({\rm svec}(Y(s)))^\top {\cal Q}(s)={\bf 0}^\top,\ ({\rm svec}(Y(s)))^\top {\cal L}(s)={\bf 0}^\top \ \forall s\in S.$$
Then it follows from these equalities and (\ref{kr}) that
$$\sum\limits_{s\in S}{\cal B}(Z(s),s)=\mathbb O_{p\times p}, \ X^0(s)Y(s)+Y(s)X^0(s)=\mathbb O_{p(s)\times p(s)}\ \forall s \in S,$$
where $Z(s)=W^0(s)Y(s)+Y(s)W^0(s)$ for all $s \in S.$
It follows from statement a) in Proposition \ref{PPA1}  that $X^0(s)Z(s)= \mathbb O_{p(s)\times p(s)}\ \forall s \in S.$

By construction, for all $s \in S$, $X^0(s)\in {\mathbb S}_+(p(s)),$ $X^0(s)t(s)={\bf 0}$ where
$t(s)=\sum\limits_{j \in J(s)} \tau_*(j)>{\bf 0}$, $\tau_*(j)=(e^\top_k\tau(j), k \in P_*(s))^\top, $ $ j \in J(s).$
Hence it follows from Proposition \ref{P01} that the vectors $\tau_*(j), j \in J_b(s),$ form a basis of the subspace
$\{t\in \mathbb R^{p(s)}:X^0(s)t={\bf 0}\}. $  Then applying Proposition \ref{PPA2} we obtain that matrix $Z(s)$ can be written in the
form
$$Z(s)=\sum\limits_{(i,j)\in V(J_b(s))}\beta_{ij}(s)(\tau_*(i)+\tau_*(j))(\tau_*(i)+\tau_*(j))^\top.$$
This implies that ${\cal B}(Z(s),s)=\sum\limits_{(i,j)\in V(J_b(s))}\beta_{ij}(s)(\tau(i)+\tau(j))(\tau(i)+\tau(j))^\top$
and hence
$$\mathbb O_{p\times p}=\sum\limits_{s\in S}{\cal B}(Z(s),s)=
\sum\limits_{s\in S}\sum\limits_{(i,j)\in V(J_b(s))}\beta_{ij}(s)(\tau(i)+\tau(j))(\tau(i)+\tau(j))^\top.$$
 It follows from this equality and Assumption jj) that
$\beta_{ij}(s)=0$ for all $(i,j)\in V(J_b(s))$ and all $s \in S,$ and consequently $Z(s)=\mathbb O_{p(s)\times p(s)}\ \forall s \in S$. Taking into account these
equalities, condition iii) and statement b) in
  Proposition \ref{PPA1}, we obtain  the equalities $Y(s)=\mathbb O_{p(s)\times p(s)}\ \forall s \in S$. But this contradicts the condition
 $\sum\limits_{s\in S}||Y(s)||>0$.
\end{proof}

Let   $\Omega_i(z),$ $ i=1,...,m,$ be bi-linear functions forming vector-function $\Omega(z)$ from the system of defining equations.
It follows from proposition proved above that the functions $\Omega_i(z),$ $ i=1,...,m,$ are independent in
 a neighborhood  of  a point $z^0$ constructed on the base of a point $(X^0,U^0)\in \mathbb C(\cop)$ by rules (\ref{zz}).

\vspace{2mm}
Let us consider example from Section \ref{S-4}. In  this example, we have $p=3,$ $p_*=6,$  $p(s)=2,$ $p_*(s)=3,$ for $s\in S=\{1,2\},$ and
system (\ref{def-eq}) has the form (\ref{sys1}), (\ref{sys2}).
 It is a system
of $m:=p_*(1)+p_*(2)=6$ bi-linear equations w.r.t. to vector
$$z=(x_{11}, x_{12},x_{13},x_{22},x_{23},x_{33},w_{11}(1),w_{12}(1),w_{22}(1),w_{22}(2),w_{23}(2),w_{33}(2))^\top\in \mathbb R^{p_*+m}.$$
One can check that, as it is  stated in Proposition \ref{PP10},   the bi-linear functions forming this system are  independent in
 a neighborhood  of  a point $z^0$ constructed on the base of $X^0,$ $ U^0(1)$, $U^0(2)$ by rules (\ref{zz}).

\section{Some comments on Assumptions j)-jjj)}\label{S-5}
In this section, we will show that  none of the assumptions j)-jjj) can be omitted in the formulations of the theorems
Theorems \ref{PP3Z},  \ref{PP4Z} and Proposition \ref{PP10}.
 
Let us start with Assumption jj).
One can show that under Assumption jj), for a given  $U^0\in {\cp}$ such that $X^0\bullet U^0=0$  there is a
unique presentation in the form (\ref{1}).
Note that this assumption does not exclude the situation when for some $s\in S,$ matrix ${U}^0(s)$ from unique presentation in the form (\ref{1})
may have several different cp factorizations.

If Assumption j) is fulfilled but
 Assumption jj) is not satisfied, then  there  exist several sets of matrices, for example,
${\mathbb U}^0=\{U^0(s), s \in S\}$ and $\bar {\mathbb U}=\{\bar U(s), s \in S\}$ such that ${\mathbb U}^0\not=\bar {\mathbb U}$ and
$U^0=\sum\limits_{s\in S}{ U}^0(s)=\sum\limits_{s\in S}\bar { U}(s), \ U^0(s),\, \bar U(s)\in {\cal F}(s), \ \forall s\in S.$
Notice that now we can not guarantee the fulfillment of the condition ii) for matrices from both sets ${\mathbb U}^0$ and $\bar {\mathbb U}$.
 But as it will be shown below the fulfillment of this condition is essential.

Also note that if Assumption jj) is violated,  then ${\rm rank}\frac{\partial\,\Omega(z^0)}{\partial z}<m$ and as a result statement
 of Proposition \ref{PP10} does not hold true.

\vspace{3mm}

{\it Violation of Assumption j) in  Theorem \ref{PP3Z}}.
Set $p=5$ and  for parameter vector $\theta=(\theta_j, j=1,...,5)$
define a matrix
{\small $$H(\theta)=\left(\begin{array}{ccccc}
1 &-\cos(\theta_4) & \cos(\theta_4+\theta_5)& \cos(\theta_2+\theta_3) & -\cos(\theta_3)\cr
    -\cos(\theta_4)&  1& -\cos(\theta_5)&  \cos(\theta_1+\theta_5)& \cos(\theta_4+\theta_3) \cr
     \cos(\theta_4+\theta_5) &-\cos(\theta_5) &1 &-\cos(\theta_1) & \cos(\theta_1+\theta_2)\cr
     \cos(\theta_3+\theta_2)& \cos(\theta_1+\theta_5) &-\cos(\theta_1)& 1 & -\cos(\theta_2)\cr
     -\cos(\theta_3)& \cos(\theta_3+\theta_4) &\cos(\theta_1+\theta_2)& -\cos(\theta_2) &1\end{array} \right)$$ }
		and vectors
		\bea &a(\theta) =(\cos(\theta_4 + \theta_5),\,
-\cos (\theta_5),\,
1,\,
-\cos (\theta_1),\,
\cos(\theta_1 + \theta_2))^\top,\nonumber\\
 &b(\theta) =
(\sin(\theta_4 + \theta_5),\,
-\sin (\theta_5),\,
0,\,
\sin (\theta_1),\,
-\sin(\theta_1 + \theta_2))^\top.\nonumber\eea
Consider  a vector $\theta^*=(\theta^*_j, j=1,...,5)$
such that $\theta^*_j>0,$ $ j=1,...,5,$ $\sum\limits_{j=1}^5\theta^*_j=\pi.$
 Let us  fix this vector and set $X^0=H(\theta^*).$

It is known (see \cite{zhang2018completely}) that $H(\theta^*)=a(\theta^*)(a(\theta^*))^\top+b(\theta^*)(b(\theta^*))^\top$,
$X^0\in {\mathbb S}_+(5)\subset {\cal COP}(5),$ $T_a(X^0)={\rm conv} \{\tau(j,\theta^*),j=1,...,5\}$ where
$$ \tau(1,\theta)=\left(\begin{array}{c}
\sin(\theta_5)\cr
   \sin(\theta_4+\theta_5)  \cr
     \sin(\theta_4)  \cr
         0 \cr
         0 \end{array} \right),
				\tau(2,\theta)=\left(\begin{array}{c}
				   0\cr
    \sin(\theta_1)\cr
      \sin(\theta_1+\theta_5)\cr
          \sin(\theta_5)\cr
           0  \end{array} \right),
		\tau(3,\theta)=\left(\begin{array}{c} 			
		    0   \cr
      0    \cr
      \sin(\theta_2)  \cr
          \sin(\theta_1+\theta_2) \cr
           \sin(\theta_1)  \end{array} \right),$$
				$$\tau(4,\theta)=\left(\begin{array}{c} 				
  \sin(\theta_2)  \cr
        0  \cr
       0   \cr
          \sin(\theta_3) \cr
         \sin(\theta_3+\theta_2)\end{array} \right),
\tau(5,\theta)=\left(\begin{array}{c}
   \sin(\theta_4+\theta_3)\cr
      \sin(\theta_3)\cr
          0\cr
            0 \cr
          \sin(\theta_4)\end{array} \right).$$
					Hence $S=\{1\},$ $P_*(1)=\{1,...,5\},$ and it is evident that Assumptions jj) and jjj)  are satisfied.
								
					Let us set $U^0=U(\theta^*)$ where $U(\theta): =\sum\limits_{j=1}^5\tau(j,\theta)(\tau(j,\theta))^\top.$
					By construction
$U(\theta^*)\in {\cal CP}(5)$ and $H(\theta^*)\bullet U(\theta^*)=0,$ consequently $(X^0, U^0)\in \mathbb C({\cal COP}(5)).$
One can check that  $U^0$ is not strictly complementary to $X^0$, i.e., Assumption j)  does not hold true.

For $0<\ep<\theta^*_1$, let us set $\theta(\ep)=(\theta^*_1-\ep,\theta^*_j,j=2,...,5)$ and consider matrices
$$X(\ep):=H(\theta(\ep)), \ U(\ep):=U(\theta(\ep)).$$
It is evident that $X(\ep)\to X^0$ and $U(\ep)\to U^0.$ It is known (see \cite{zhang2018completely}) that $X(\ep)\in {\cal COP}(p)$, $U(\ep)\in {\cal CP}(p)$
and $X(\ep)\bullet U(\ep)=0.$
It is also known that $X(\ep)\not \in {\mathbb S}_+(5). $

Let us show that
$X(\ep)U(\ep)+U(\ep)X(\ep)\not=\mathbb O_{5\times 5}.$ Do to this, first let us show that
$X^0+U^0\in {\rm int}\, {\mathbb S}_+(5).$

Remind that by construction $X^0=a(\theta^*)(a(\theta^*))^\top+b(\theta^*)(b(\theta^*))^\top$.
Since $X^0\in {\mathbb S}_+(5)$ and $U^0\in {\mathbb S}_+(5)$, hence it is evident that $X^0+U^0\in {\mathbb S}_+(5).$
Let $t\in \mathbb R^5$ be such that
\bea &t^\top(X^0+U^0)t=0\ \Longrightarrow\ t^\top X^0t=0,  \ t^\top U^0t=0, \ \Longrightarrow\nonumber\\
&t^\top a(\theta^*)=0,\, t^\top b(\theta^*) ,\, t^\top \tau(j,\theta^*)=0,\, j=1,...,5.\nonumber\eea
It was shown in \cite{zhang2018completely} that this system has only trivial solution $t={\bf 0}$. Thus we have shown that $X^0+U^0\in{\rm int } {\mathbb S}_+(5).$
Consequently
$$X(\ep)+U(\ep)\in {\rm int }\, {\mathbb S}_+(5) \mbox{ for sufficiently small } \ep>0.$$

Now let us suppose that for sufficiently small $\ep>0$, the equality
$X(\ep)U(\ep)+U(\ep)X(\ep)=\mathbb O_{5\times 5}$  holds true. Then it follows from Proposition \ref{P-23-101} that $X(\ep)\in {\mathbb S}_+(p)$,
 but this contradicts the known condition $X(\ep)\not \in {\mathbb S}_+(p)$.

Thus we have shown that all conditions of Theorem \ref{PP3Z}, except Assumption j), are fulfilled, but   statements of this theorem do not hold true.
Hence the assumption is essential in  this theorem.

\vspace{1mm}

{\it Violation of assumption jjj) in  Theorem \ref{PP3Z}}. Let us set $p=3$ and consider
\bea& X^0=\left(\begin{array}{rrr}
1 & -1 &0\cr
-1&1&0\cr
0&0&1\end{array}\right)=a\,a^\top+b\, b^\top,
U^0=\begin{pmatrix}
1 & 1 &0\cr
1&1&0\cr
0&0&0\end{pmatrix}=\tau\tau^\top,\nonumber\\
&a^\top=(1,-1,0),\ b^\top=(0,0,1),\ \tau^\top=(1,1,0).\nonumber\eea

It is easy to see that $(X^0,U^0)\in \mathbb C(\cop),$ $T_a(X^0)=\tau=T_a(1,X^0).$ Hence $J=\{1\},$
$\tau(1)=\tau,$   $S=\{1\},$ $J(1)=J,$ $ P_*(1)=\{1,2\},$ $M(j)=\{1,2,3\},j \in J,$
$ U^0(1)=U^0,$
$X^0(1)=\begin{pmatrix}
1&-1\cr
-1&1\end{pmatrix},$ $W^0(1)=\begin{pmatrix}
1&1\cr
1&1\end{pmatrix}.$
Assumption j) and jj) are fulfiled, Assumption jjj) is not fulfilled.

Set
\bea& a^\top(\ep)=(1,-1,\ep),\ b^\top(\ep)=(0,-\ep,1),\ \tau^\top(\ep)=(1-\ep^2,1,\ep),\nonumber\\
&X(\ep)=a(\ep)\,a^\top(\ep)+b(\ep)\, b^\top(\ep),\ U(\ep)=\tau(\ep) \tau^\top(\ep).\nonumber\eea
Hence
$$X(1,\ep)=\begin{pmatrix}
1& -1\cr
-1& 1+\ep^2\end{pmatrix}, W(1,\ep)=\begin{pmatrix}
(1-\ep^2)^2& (1-\ep^2)\cr
(1-\ep^2)& 1\end{pmatrix}.$$
By construction
$X(\ep) \to X^0, \ U(\ep)\to U^0 \mbox{ as } \ep\to 0$, $X(\ep)\bullet U(\ep)=0$ and
$X(\ep)\in {\mathbb S}_+(3)\subset {\cal COP}(3),$ $  U(\ep)\in {\cal CP}(3)$ for all $\ep\in [0,1].$

 Thus we have shown that all conditions of Theorem \ref{PP3Z}  except Assumption jjj) are satisfied.
But the statements of the theorem  is not true since
$$X(1,\ep)W(1,\ep)+U(1,\ep)X(1,\ep) \not =\mathbb O_{2\times 2} \mbox{  and } U_{3q}(\ep) \not=0 \ \forall q=1,2,3.$$
Note that in this example, violation of Assumption jjj) implies that $X^0$ is not strictly complementary to $U^0.$

\vspace{2mm}

{\it Violation of Assumption j) in Theorem \ref{PP4Z}.} Let us set $p=4$,
$$X^0=\begin{pmatrix}
1 & 1 &1&1\cr
1&0&0&0\cr
1&0&0&0\cr
1&0&0&0\end{pmatrix},
U^0=\begin{pmatrix}
0&0 & 0 &0\cr
0&1&1&1\cr
0&1&1&1\cr
0&1&1&1\end{pmatrix}=(e_2+e_3+e_4)(e_2+e_3+e_4)^\top.$$
It is easy to see that $(X^0,U^0)\in \mathbb C(\cop),$ $T_a(X^0)={\rm conv}\{e_2,e_3,e_4\}=T_a(1,X^0).$ Hence $J=\{1,2,3\},$
$\tau(1)=e_2,$ $\tau(2)=e_3,$  $\tau(3)=e_4,$ $S=\{1\},$ $J(1)=J,$ $ P_*(1)=\{2,3,4\},$ $M(j)=\{2,3,4\},j \in J,$
$ U^0(1)=U^0,$
$X^0(1)=\mathbb O_{3\times 3},$
$W^0(1)
=(1,1,1) (1,1,1)^\top.$
Hence we see that  Assumptions  jj) and jjj) are fulfilled but Assumption j) is not fulfilled. Notice that in this example
despite the violation of Assumption j), conditions i) and ii) are satisfied.

Consider
$$X(\ep)=\begin{pmatrix}
1 & 1 &1&1\cr
1 &0&-\ep&\ep\cr
1&-\ep&0&\ep\cr
1&\ep&\ep&-2\ep\end{pmatrix},\,W(1,\ep)=\begin{pmatrix}
1&1&1\cr
1&1&1\cr
1&1&1\end{pmatrix} \ \Longrightarrow \
 X(1,\ep)=\begin{pmatrix}
0&-\ep&\ep\cr
-\ep&0&\ep\cr
\ep&\ep&-2\ep\end{pmatrix}.$$
It is easy to see that
$X(\ep) \to X^0, \ W(1,\ep) \to W^0(1) \mbox{ as } \ep\to 0$ and $ X(1,\ep)W(1,\ep)+U(1,\ep)X(1,\ep)=\mathbb O_{3\times 3} \; \forall \ep\in \mathbb R.$

 Thus  all conditions of Theorem \ref{PP4Z}  except Assumption j) are satisfied.
But the statements of the theorem  is not true since $X(\ep)\not\in {\cal COP}(4)$ for $\ep>0.$

\vspace{3mm}

{\it Violation of Assumption jjj) in Theorem \ref{PP4Z}.} Let us $p=3$ and consider
$$X^0=\begin{pmatrix}
1 & 0 &0\cr
0&0&0\cr
0&0&0\end{pmatrix}, U^0=\begin{pmatrix}
0 & 0 &0\cr
0&2&1\cr
0&1&2\end{pmatrix}=e_2e^\top_2+e_3e^\top_3+(e_2+e_3)(e_2+e_3)^\top.$$
It is easy to see that $(X^0,U^0)\in \mathbb C(\cop),$ $T_a(X^0)={\rm conv}\{e_2,e_3\}=T_a(1,X^0).$ Hence $J=\{1,2\},$
$\tau(1)=e_2,$ $\tau(2)=e_3,$  $S=\{1\},$ $J(1)=J,$ $ P_*(1)=\{2,3\},$ $M(j)=\{1,2,3\},j \in J,$
$ U^0(1)=U^0,$
$X^0(1)=\begin{pmatrix}
0&0\cr
0&0\end{pmatrix},$ $W^0(1)=\begin{pmatrix}
2&1\cr
1&2\end{pmatrix}.$
By construction (see Lemma \ref{L-1}), $U^0$ is strictly complementary to $X^0$, hence Assumption j) is fulfilled.
 Assumption jj) is also fulfilled as $S=\{1\}.$
Since $P_*(1)\not=M(j), j \in J,$  we conclude that Assumption jjj) is not fulfilled.

Consider
$X(\ep)=\begin{pmatrix}
1 & -\ep &0\cr
-\ep&0&0\cr
0&0&0\end{pmatrix},\ W(1,\ep)=\begin{pmatrix}
2&1\cr
1&2\end{pmatrix} \ \Longrightarrow\ X(1,\ep)=\begin{pmatrix}
0&0\cr
0&0\end{pmatrix}.$

It is easy to see that
\bea &X(\ep) \to X^0, \, W(1,\ep) \to W^0(1) \mbox{ as } \ep\to 0;\label{limm}\\
& X(1,\ep)W(1,\ep)+U(1,\ep)X(1,\ep)=\mathbb O_{2\times 2} \
\forall \ep\in \mathbb R.\nonumber\eea
 Thus we have shown that all conditions of Theorem \ref{PP4Z}  except Assumption jjj) are satisfied.
But the statements of the theorem  is not true since $X(\ep) \not \in {\cal COP}(3)$ for $\ep>0.$

Note that in this example, violation of Assumption jjj) leads to the fact  that $X^0$ is not strictly complementary to $U^0.$

\vspace{2mm}

 {\it Violation of condition ii) in Theorem \ref{PP4Z}.} Notice that  Theorem \ref{PP4Z} is formulated under Assumptions j)-jjj). But in fact we use
 only Assumption jjj) and conditions i)-iii) which are consequence of  Assumptions j) and jj).  In an example below, Assumption jjj) and conditions
i) and iii) are
fulfilled, but  the only condition ii) is violated. As a result statements of  Theorem \ref{PP4Z} do not hold true.

Let us set $p=3$ and consider
$$X^0=\begin{pmatrix}
1 & 1 &1\cr
1&0&0\cr
1&0&0\end{pmatrix}, U^0=\begin{pmatrix}
0 & 0 &0\cr
0&1&0\cr
0&0&1\end{pmatrix}.$$
It is easy to see that $(X^0,U^0)\in \mathbb C(\cop),$ $T_a(X^0)={\rm conv}\{e_2,e_3\}=T_a(1,X^0).$ Hence $J=\{1,2\},$
$\tau(1)=e_2,$ $\tau(2)=e_3,$  $S=\{1\},$ $J(1)=J,$ $ P_*(1)=\{2,3\},$ $M(j)=\{2,3\},j \in J,$
$ U^0(1)=U^0,$
$X^0(1)=\begin{pmatrix}
0&0\cr
0&0\end{pmatrix},$ $W^0(1)=\begin{pmatrix}
1&0\cr
0&1\end{pmatrix}.$
Since  ${\rm rank}(X^0(1)+W^0(1))=2$ , we have  $(X^0(1)+W^0(1))\in {\rm int}\,{\mathbb S}_+(2).$ Thus  in this example
Assumptions jj), jjj) and conditions i) and iii) are  fulfilled
 but condition ii) is violated.

Let us set
$$X(\ep)=X^0\ \Longrightarrow\ X^0(1)= X(1,\ep)=\begin{pmatrix}
0&0\cr
0&0\end{pmatrix} ,\ W(1,\ep)=\begin{pmatrix}
1&-\ep\cr
-\ep&1\end{pmatrix}. $$
Then by construction, relations (\ref{limm}) hold true.

Thus we have shown that all conditions (under which Theorem \ref{PP4Z} is proved)  except condition ii) are satisfied.
But the statements of the theorem  is not true. In fact, in the example we have  $W(1,\ep) \not \in {\cal CP}(2)$ for $\ep>0 $
and as result  $U(\ep):=\sum\limits_{s \in S}{\cal B}(W(s,\ep),s)={\cal B}(W(1,\ep),1)\not \in {\cal CP}(p).$

In this section,  we have shown that none of the assumptions j)-jjj) can be omitted in the formulations of the theorems
Theorems \ref{PP3Z},  \ref{PP4Z} and Proposition \ref{PP10} and none of the assumptions is a consequence of other assumptions.
 However, we guess that in Theorems \ref{PP3Z},  \ref{PP4Z} the assumptions jj), jjj) can be replaced by some less restrictive ones.

\section{Conclusion}
The main motivation for these studies is our desire to explore parametric conic problems over the cones of copositive and completely positive matrices.
 This led us to examine for these cones  the complementarity relations,
which play a central role in the study of optimization problem.  
Based on the analysis of the properties of the elements of the complementarity set under consideration, we obtained
a system of bi-linear and linear equations w.r.t. extended set of variables, which allow us to completely
describe the complementarity set in a neighborhood of a point  from this set under some non-degeneracy assumptions.
This result will be the basis for our further research related to the sensitivity of the solution to copositive programming problems.

Also, these results can serve as the basis for further research of  the properties of the complementarity set,
 which will allow us to relax the assumptions made in the paper.

\bibliographystyle{tfnlm}
\bibliography{interactnOLGA}

\begin{thebibliography}{10}
\providecommand{\url}[1]{\normalfont{#1}}
\providecommand{\urlprefix}{Available from: }

\bibitem{anjos2011handbook}
Anjos~MF, Lasserre~JB. Handbook on semidefinite, conic and polynomial
  optimization. Vol. 166. Springer Science \& Business Media; 2011.

\bibitem{letchford2018guide}
Letchford~AN, Parkes~AJ. A guide to conic optimisation and its applications.
  RAIRO-Operations Research. 2018;\hspace{0pt}52(4-5):1087--1106.

\bibitem{pataki2000geometry}
Pataki~G. The geometry of semidefinite programming. Handbook of semidefinite
  programming: Theory, Algorithms, and Applications. 2000;\hspace{0pt}:29--65.

\bibitem{ramirez2020refining}
Ram{\'\i}rez~C~H, Roshchina~V. Refining the partition for multifold conic
  optimization problems. Optimization. 2020;\hspace{0pt}69(11):2489--2507.

\bibitem{mohammad2020parametric}
Mohammad-Nezhad~A, Terlaky~T. Parametric analysis of semidefinite optimization.
  Optimization. 2020;\hspace{0pt}69(1):187--216.

\bibitem{bonnans2005perturbation}
Bonnans~JF, Ram{\'\i}rez~C~H. Perturbation analysis of second-order cone
  programming problems. Mathematical Programming.
  2005;\hspace{0pt}104(2-3):205--227.

\bibitem{kostyukova2020immobile}
Kostyukova~O, Tchemisova~T, Dudina~O. Immobile indices and cq-free optimality
  criteria for linear copositive programming problems. Set-Valued and
  Variational Analysis. 2020;\hspace{0pt}28:89--107.

\bibitem{cottle2009linear}
Cottle~RW, Pang~JS, Stone~RE. The linear complementarity problem. SIAM; 2009.

\bibitem{gao2022monotone}
Gao~Y, N{\'e}meth~SZ, Sznajder~R. The monotone extended second-order cone and
  mixed complementarity problems. Journal of Optimization Theory and
  Applications. 2022;\hspace{0pt}193(1-3):381--407.

\bibitem{ding2023strict}
Ding~L, Udell~M. A strict complementarity approach to error bound and
  sensitivity of solution of conic programs. Optimization Letters.
  2023;\hspace{0pt}17(7):1551--1574.

\bibitem{alizadeh1997optimization}
Alizadeh~F, Schmieta~S. Optimization with semi-definite, quadratic and linear
  constraints, report 23-97. Rutgers Center for Operations Research, Rutgers
  University. 1997;\hspace{0pt}.

\bibitem{rudolf2011bilinear}
Rudolf~G, Noyan~N, Papp~D, et~al. Bilinear optimality constraints for the cone
  of positive polynomials. Mathematical programming.
  2011;\hspace{0pt}129(1):5--31.

\bibitem{gowda2014bilinearity}
Gowda~MS, Tao~J. On the bilinearity rank of a proper cone and lyapunov-like
  transformations. Mathematical Programming. 2014;\hspace{0pt}147:155--170.

\bibitem{sznajder2016lyapunov}
Sznajder~R. The lyapunov rank of extended second order cones. Journal of Global
  Optimization. 2016;\hspace{0pt}66:585--593.

\bibitem{bomze2012copositive}
Bomze~IM. Copositive optimization--recent developments and applications.
  European Journal of Operational Research. 2012;\hspace{0pt}216(3):509--520.

\bibitem{bomze2012think}
Bomze~IM, Schachinger~W, Uchida~G. Think co (mpletely) positive! matrix
  properties, examples and a clustered bibliography on copositive optimization.
  Journal of Global Optimization. 2012;\hspace{0pt}52(3):423--445.

\bibitem{berman2015open}
Berman~A, Dur~M, Shaked-Monderer~N. Open problems in the theory of completely
  positive and copositive matrices. The Electronic Journal of Linear Algebra.
  2015;\hspace{0pt}29:46--58.

\bibitem{alizadeh1998primal}
Alizadeh~F, Haeberly~JPA, Overton~ML. Primal-dual interior-point methods for
  semidefinite programming: convergence rates, stability and numerical results.
  SIAM Journal on Optimization. 1998;\hspace{0pt}8(3):746--768.

\bibitem{kostyukova2022equivalent}
Kostyukova~O, Tchemisova~T. On equivalent representations and properties of
  faces of the cone of copositive matrices. Optimization.
  2022;\hspace{0pt}71(11):3211--3239.

\bibitem{kostyukova2021structural}
Kostyukova~O, Tchemisova~T. Structural properties of faces of the cone of
  copositive matrices. Mathematics. 2021;\hspace{0pt}9(21):2698.

\bibitem{dickinson2011geometry}
Dickinson~PJ. Geometry of the copositive and completely positive cones. Journal
  of Mathematical Analysis and Applications. 2011;\hspace{0pt}380(1):377--395.

\bibitem{groetzner2020factorization}
Groetzner~P, D{\"u}r~M. A factorization method for completely positive
  matrices. Linear Algebra and its Applications. 2020;\hspace{0pt}591:1--24.

\bibitem{zhang2018completely}
Zhang~Q. Completely positive cones: are they facially exposed? Linear Algebra
  and its Applications. 2018;\hspace{0pt}558:195--204.

\end{thebibliography}

\appendix
\section{Proofs of auxiliary statements}

\begin{proposition} \label{P-23-101} Suppose that $U\in {\mathbb S}(p)$,  $X\in {\mathbb  S}(p)$, $UX+XU=\mathbb O_{p\times p}$ and $X+U\in {\rm int }\,{\mathbb S}_+(p).$
Then $X\in{\mathbb  S}_+(p),$ $U\in {\mathbb  S}_+(p)$ and $UX=\mathbb O_{p\times p}$.
\end{proposition}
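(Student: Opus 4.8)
The plan is to reduce to the case of a diagonal $X$ and then read off both the semidefiniteness and the orthogonality from matrix entries. First I would diagonalize: since $X\in\mathbb S(p)$, write $X=Q\Lambda Q^\top$ with $Q$ orthogonal and $\Lambda={\rm diag}(\lambda_1,\dots,\lambda_p)$. Replacing the pair $(X,U)$ by $(Q^\top XQ,\,Q^\top UQ)$ leaves all hypotheses intact: $UX+XU=\mathbb O_{p\times p}$ is preserved by orthogonal congruence (because $Q^\top UQ\,Q^\top XQ+Q^\top XQ\,Q^\top UQ=Q^\top(UX+XU)Q$), and so is $X+U\in{\rm int}\,\mathbb S_+(p)$; it also leaves the desired conclusions equivalent. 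So I may assume $X=\Lambda$. Then the $(i,j)$ entry of $UX+XU$ equals $(\lambda_i+\lambda_j)U_{ij}$, so the anticommutation relation forces $U_{ij}=0$ whenever $\lambda_i+\lambda_j\neq 0$; in particular $U_{ii}=0$ whenever $\lambda_i\neq 0$.

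Next I would derive $X\in\mathbb S_+(p)$. If some $\lambda_i<0$, then $\lambda_i\neq 0$ gives $U_{ii}=0$, hence the $i$-th diagonal entry of $X+U$ equals $\lambda_i+U_{ii}=\lambda_i<0$, which is impossible for a positive definite matrix since $e_i^\top(X+U)e_i>0$. Thus $\lambda_i\geq 0$ for all $i$, i.e. $X\in\mathbb S_+(p)$. Since the two hypotheses $UX+XU=\mathbb O_{p\times p}$ and $X+U\in{\rm int}\,\mathbb S_+(p)$ are symmetric under exchanging $X$ and $U$, running the same argument with $U$ diagonalized instead gives $U\in\mathbb S_+(p)$.

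Finally, I would obtain $UX=\mathbb O_{p\times p}$. Taking traces in $UX+XU=\mathbb O_{p\times p}$ yields $2\,{\rm trace}(XU)=0$, i.e. $X\bullet U=0$, and for a pair of positive semidefinite matrices this forces $XU=\mathbb O_{p\times p}$: writing $X=RR^\top$, the matrix $R^\top UR=(U^{1/2}R)^\top(U^{1/2}R)$ is positive semidefinite with ${\rm trace}(R^\top UR)={\rm trace}(XU)=0$, so $R^\top UR=\mathbb O$, whence $U^{1/2}R=\mathbb O$, then $UR=U^{1/2}(U^{1/2}R)=\mathbb O$, and $XU=R(R^\top U)=R(UR)^\top=\mathbb O_{p\times p}$; then $UX=-XU=\mathbb O_{p\times p}$ as well. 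The argument is essentially mechanical; the only step invoking a nontrivial standard fact is the last implication $X\bullet U=0\Rightarrow XU=\mathbb O_{p\times p}$ for the cone $\mathbb S_+(p)$, and the only genuine idea is the reduction to diagonal $X$ together with the observation that a negative eigenvalue of $X$ would survive on the diagonal of $X+U$.
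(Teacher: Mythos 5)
Your proof is correct and follows essentially the same route as the paper's: diagonalize, observe that the anticommutation relation annihilates the relevant diagonal entries of the other matrix wherever the eigenvalue is nonzero, so a negative eigenvalue would force a negative diagonal entry of $X+U$, contradicting $X+U\in{\rm int}\,\mathbb S_+(p)$; then taking the trace of $UX+XU=\mathbb O_{p\times p}$ gives $X\bullet U=0$, which for two positive semidefinite matrices yields $UX=\mathbb O_{p\times p}$. If anything, your appeal to the symmetry of the hypotheses in $X$ and $U$ to obtain the second semidefiniteness claim is cleaner than the paper's simultaneous handling of both spectral decompositions.
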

\begin{proof} If $U\in {\mathbb  S}(p)$ and $X\in {\mathbb  S}(p)$, then there exist matrices $Q$ and $G$ such that
\bea &U=Q\Lambda Q^\top,\ X=GMG^\top ,\ Q^\top Q=E(p), \ G^\top G=E(p) \ \mbox{ where }\nonumber\\
&\Lambda={\rm diag}(\lambda_k,k=1,...,p), \ M={\rm diag}(\mu_k,k=1,...,p),\nonumber\eea
 $\lambda_k,k=1,...,p$, are eigenvalues of matrix $U$, $\mu_k,k=1,...,p$, are eigenvalues of matrix $X$.
 Then conditions $UX+XU=\mathbb O_{p\times p}$ and $X+U\in {\rm int }\,{\mathbb S}_+(p)$ take the form
$$\mathbb O_{p\times p}=Q\Lambda Q^\top GMG^\top   +GMG^\top Q\Lambda Q^\top, \
Q\Lambda Q^\top+ GMG^\top\in {\rm int} \, {\mathbb  S}_+(p).$$
It follows from these relations and condition ${\rm det} Q\not=0$ that
$$ \Lambda F+F\Lambda=\mathbb O, \ \ \Lambda +F\in {\rm int} \, {\mathbb  S}_+(p) \ \mbox{ where } F:=Q^\top GMG^\top Q.$$
This implies that
$ 2\lambda_kF_{kk}=0,\   \lambda_k+F_{kk}>0\ \forall \, k=1,...,p,$ wherefrom we obtain that $\lambda_k\geq 0,$ $F_{kk}\geq 0$ for $k=1,...,p.$

If $\lambda_k\geq 0$ for all $k=1,...,p,$ then it is evident that
$U=Q\Lambda Q^\top \in {\mathbb  S}_+(p).$

Notice that $F_{kk}=\mu_k||\omega_k||^2,$ where $\omega_k=G^\top Q e_k.$ If  ${\rm det}\, Q\not=0$ and ${\rm det}\, G\not=0$, then
$||\omega_k||^2>0$  for all $k=1,...,p.$ These inequalities and relations $F_{kk}\geq 0$, $F_{kk}=\mu_k||\omega_k||^2,$ $k=1,...,p,$ imply the inequalities
$\mu_k\geq 0$  for all $k=1,...,p$  and, hence, $X=GM G^\top \in {\mathbb  S}_+(p).$
 Thus we have proved that $U \in {\mathbb S}_+(p)$ and $X\in {\mathbb  S}_+(p).$

It follows from the condition $UX+XU=\mathbb O_{p\times p}$ that
$0={\rm trace}(UX+XU)=2{\rm trace}(UX).$
 It is easy to show that the relations
$U \in {\mathbb  S}_+(p)$, $X\in {\mathbb  S}_+(p)$ and ${\rm trace}(UX)=0$
imply the equality $UX=\mathbb O_{p\times p}$.\end{proof}

\begin{proposition}\label{PPA1} Let $X\in {\mathbb S}_+(p),$ $W\in {\mathbb S}_+(p),$ and $Y\in {\mathbb S}(p)$ be such that
$WX=\mathbb O_{p\times p},$
\be XY+YX=\mathbb O_{p\times p}.\label{a-1}\ee
Denote $Z:=YW+WY\in {\mathbb S}(p)$. Then
a) $ XZ=\mathbb O_{p\times p}$, b)   the equality $Z=\mathbb O_{p\times p}$ implies the equality
$Y=\mathbb O_{p\times p}$ if $(X+W)\in {\rm int}\,{\mathbb S}_+(p).$
\end{proposition}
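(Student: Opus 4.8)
The plan is to handle parts a) and b) separately, in each case reducing a matrix identity to an elementary eigenvalue statement. For part a), I would first note that since $X$ and $W$ are symmetric, the hypothesis $WX=\mathbb O_{p\times p}$ also yields $XW=(WX)^\top=\mathbb O_{p\times p}$. Expanding the product $XZ$ and using (\ref{a-1}) in the form $XY=-YX$,
\[
XZ=X(YW+WY)=(XY)W+(XW)Y=(XY)W=-YXW=-Y(XW)=\mathbb O_{p\times p},
\]
which proves a). This step is purely algebraic.

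For part b), the key observation is that $Z=\mathbb O_{p\times p}$ is precisely the relation $WY+YW=\mathbb O_{p\times p}$. Adding this to (\ref{a-1}) gives $(X+W)Y+Y(X+W)=\mathbb O_{p\times p}$. Writing $G:=X+W$, which is positive definite by the assumption $(X+W)\in{\rm int}\,{\mathbb S}_+(p)$, I would diagonalize $G=Q\Lambda Q^\top$ with $Q$ orthogonal, $Q^\top Q=E(p)$, and $\Lambda={\rm diag}(\lambda_k,k=1,\dots,p)$ with all $\lambda_k>0$. Setting $\widetilde Y:=Q^\top Y Q$, the relation becomes $\Lambda\widetilde Y+\widetilde Y\Lambda=\mathbb O_{p\times p}$, that is, $(\lambda_i+\lambda_j)\widetilde Y_{ij}=0$ for all $i,j$. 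Since $\lambda_i+\lambda_j>0$, every entry $\widetilde Y_{ij}$ vanishes, so $\widetilde Y=\mathbb O_{p\times p}$ and hence $Y=Q\widetilde Y Q^\top=\mathbb O_{p\times p}$. This is the same diagonalization device already used in the proof of Proposition~\ref{P-23-101}.

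I do not anticipate a genuine obstacle: both parts reduce to short computations. The only spot requiring slight care is in part a), where one must invoke the symmetry of $X$ and $W$ to pass from $WX=\mathbb O_{p\times p}$ to $XW=\mathbb O_{p\times p}$ and then apply the anticommutator relation (\ref{a-1}) to the correct factor of the product; for part b), one should simply notice that the two hypotheses $XY+YX=\mathbb O_{p\times p}$ and $WY+YW=\mathbb O_{p\times p}$ combine into a single Lyapunov-type identity for the positive definite matrix $X+W$, after which the conclusion is immediate.
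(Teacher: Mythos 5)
Your proof is correct, and in both parts it is more streamlined than the argument in the paper. For part a) you avoid diagonalization altogether: the identity $XW=(WX)^\top=\mathbb O_{p\times p}$ plus the substitution $XY=-YX$ gives $XZ=XYW+XWY=-YXW=\mathbb O_{p\times p}$ in one line, and this uses only the symmetry of $X$ and $W$, not their positive semidefiniteness. The paper instead exploits that $X$ and $W$ commute (since $XW=WX=\mathbb O_{p\times p}$) to diagonalize them simultaneously with a single orthogonal $Q$, writes $F:=XZ=XYW$ in that basis as ${\rm diag}(x_k)\bar Y\,{\rm diag}(w_k)$, and kills it using the anticommutation relation together with $x_kw_k=0$ --- the same cancellation, just carried out entrywise. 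For part b) your observation that the two anticommutator identities add up to the single Lyapunov-type relation $(X+W)Y+Y(X+W)=\mathbb O_{p\times p}$, with $X+W$ positive definite, is the cleanest possible route: one diagonalization of $X+W$ and the inequality $\lambda_i+\lambda_j>0$ finish it. The paper reaches the same conclusion by keeping $X$ and $W$ separate, partitioning the index set into $P_x=\{k:x_k>0\}$ and its complement (which is where ${\rm int}\,{\mathbb S}_+(p)$ enters, via $x_k+w_k>0$), and checking the four resulting blocks of $\bar Y$ case by case. Your version buys brevity and slightly weaker hypotheses (part b) as you prove it does not even use $WX=\mathbb O_{p\times p}$ or the individual semidefiniteness of $X$ and $W$); the paper's version keeps the simultaneous eigenbasis of $X$ and $W$ explicit, which is the same device it reuses elsewhere in the appendix. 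Both are valid.
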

\begin{proof} It follows from inclusions $X\in {\mathbb S}_+(p),$ $W\in {\mathbb S}_+(p),$ and equality
$WX=\mathbb O_{p\times p}$ that there exist a matrix $Q\in \mathbb R^{p\times p}$ and numbers $w_k\geq 0,$ $x_k\geq 0,$ $k=1,...,p,$
such that $Q^\top Q=E(p)$ and
$$QXQ^\top={\rm diag}(x_k,k=1,...,p), \ QWQ^\top={\rm diag}(w_k,k=1,...,p),  \ w_kx_k=0\ \forall \, k=1,...,p.$$
Denote $F:=XZ$, then it follows from equality $ WX=\mathbb O_{p\times p}$  that $F=XYW.$
Hence
\be QFQ^\top=QXQ^\top QYQ^\top QWQ={\rm diag}(x_k,k=1,...,p)\bar Y {\rm diag}(w_k,k=1,...,p)\label{a-4}\ee
where $  \bar Y:=QYQ^\top$. On the other hand, equality (\ref{a-1}) implies that
\bea&\mathbb O_{p\times p}=Q(XY+YX)Q^\top=QXQ^\top QYQ^\top +QYQ^\top QXQ^\top\ \Longrightarrow\nonumber\\
& \mathbb O_{p\times p}={\rm diag}(x_k,k=1,...,p)\bar Y+\bar Y{\rm diag}(x_k,k=1,...,p)\ \Longrightarrow\label{x-1}\\
&{\rm diag}(x_k,k=1,...,p)\bar Y=-\bar Y{\rm diag}(x_k,k=1,...,p).\nonumber\eea
It follows from this equality, and equalities  $w_kx_k=0\ \forall \, k=1,...,p,$ and (\ref{a-4}) that
$$ QFQ^\top=-\bar Y{\rm diag}(x_k,k=1,...,p) {\rm diag}(w_k,k=1,...,p) =\mathbb O_{p\times p}.$$
Taking into account that ${\rm det}Q\not=0$ we conclude that $F=\mathbb O_{p\times p}.$

Now suppose that $(X+W)\in {\rm int}\,{\mathbb S}_+(p)$ and equality $Z=\mathbb O_{p\times p}$ holds true.
Then the set $P=\{1,...,p\} $ can be partition us follows
$$P_x:=\{k \in P:x_k>0\},\ P_y:=\{k \in P:y_k>0\}, \ P=P_x\cup P_y,\ P_x\cap P_y=\emptyset.$$
The equality $Z=\mathbb O_{p\times p}$ implies the equality $YW+WY=\mathbb O_{p\times p}$ wherefrom we obtain that
$$ \mathbb O_{p\times p}={\rm diag}(w_k,k=1,...,p)\bar Y+\bar Y{\rm diag}(w_k,k=1,...,p).$$
It follows from this equality and equality (\ref{x-1}) that
\bea& \bar Y_{kq}x_k+\bar Y_{kq}x_q=0 \ \forall  k \in P_x,\, q\in P_x;\ \bar Y_{kq}x_k=0\ \forall  k \in P_x,\, q\in P_y;\nonumber\\
&\bar Y_{kq}y_k+\bar Y_{kq}y_q=0 \ \forall  k \in P_y,\, q\in P_y;\ \bar Y_{kq}y_q=0\ \forall  k \in P_x,\, q\in P_y.\nonumber\eea
These equalities imply that $\bar Y_{kq}=0$ for all $k \in P$ and $q\in P$ and hence $Y=\mathbb O_{p\times p}$.
\end{proof}
\begin{proposition} \label{P01} Let  $X\in {\mathbb S}_+(p)$. Suppose that there exists $\bar t \in H(X):=\{t \in \mathbb R^p:Xt=0\}$
such that $\bar t>0.$ Then
$H(X)={\rm span} \{\tau(j), j \in J\},$
where $\tau(j), j \in J,$ are vertices of  the polyhedron $T_*(X):=H(X)\cap\{t \in \mathbb R^p:t\geq 0,\sum\limits_{k=1}^p t_k=1\}.$
\end{proposition}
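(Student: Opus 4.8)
The plan is to prove the two inclusions of the claimed identity, after a preliminary normalization of $\bar t$. Since $H(X)=\{t\in\mathbb R^p:Xt=0\}$ is a linear subspace, replacing $\bar t$ by $\bar t/||\bar t||_1$ keeps it in $H(X)$, keeps all its entries strictly positive, and makes $\sum_{k=1}^p\bar t_k=1$; so without loss of generality $\bar t\in T_*(X)$, and in particular $T_*(X)\neq\emptyset$. I would also record here that $T_*(X)$ is compact, being the intersection of the closed subspace $H(X)$ with the standard simplex, hence a polytope with $T_*(X)={\rm conv}\{\tau(j),\,j\in J\}$ and $J$ finite. Since a polytope lies in the span of its vertices while its vertices lie in the polytope, this gives ${\rm span}\{\tau(j),\,j\in J\}={\rm span}\,T_*(X)$, so it suffices to prove $H(X)={\rm span}\,T_*(X)$.

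The inclusion ${\rm span}\,T_*(X)\subseteq H(X)$ is immediate from $T_*(X)\subseteq H(X)$ and the fact that $H(X)$ is a subspace. For the converse, take an arbitrary $v\in H(X)$ and set $c:=\sum_{k=1}^p v_k$. Because $\sum_{k=1}^p\bar t_k=1$, the vector $v-c\bar t$ has zero entry-sum, and $v-c\bar t\in H(X)$ by linearity of $X$. Then for all sufficiently small $\varepsilon>0$ the vector $\bar t+\varepsilon(v-c\bar t)$ is still strictly positive (as $\bar t>0$), still annihilated by $X$, and still has entry-sum $1$; hence $\bar t+\varepsilon(v-c\bar t)\in T_*(X)$. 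Writing $v-c\bar t=\varepsilon^{-1}\big((\bar t+\varepsilon(v-c\bar t))-\bar t\big)$ exhibits $v-c\bar t$ as a linear combination of two elements of $T_*(X)$, so $v-c\bar t\in{\rm span}\,T_*(X)$; together with $\bar t\in T_*(X)\subseteq{\rm span}\,T_*(X)$ this yields $v=(v-c\bar t)+c\bar t\in{\rm span}\,T_*(X)$. The two inclusions combine to $H(X)={\rm span}\,T_*(X)={\rm span}\{\tau(j),\,j\in J\}$.

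The argument is elementary and I do not expect a genuine obstacle; the only delicate point is that a general $v\in H(X)$ need not have zero entry-sum, so one cannot perturb $\bar t$ directly along $v$ and remain on the simplex — this is precisely what the correction $v\mapsto v-c\bar t$ repairs, using that $\bar t$ is a strictly positive point of $T_*(X)$. I would also note that the semidefiniteness hypothesis $X\in\mathbb S_+(p)$ plays no real role in this particular statement and is kept only because it is the setting in which the proposition is later applied.
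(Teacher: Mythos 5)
Your proof is correct. Both your argument and the paper's rest on the same key idea --- the strictly positive point $\bar t$ lets you push any element of $H(X)$ into $T_*(X)$ --- but the executions differ. The paper fixes a basis $t(i)$, $i\in I_b$, of $H(X)$, shifts it to $t(i)+\theta\bar t>0$ for large $\theta$, and then spends most of the proof on a case analysis verifying that (after swapping one shifted vector for $\bar t$ itself) the normalized vectors still form a basis of $H(X)$ lying in $T_*(X)$; each basis vector is then a convex combination of the vertices $\tau(j)$. You instead treat an arbitrary $v\in H(X)$ directly: subtracting $c\bar t$ with $c=\sum_k v_k$ puts the correction on the zero-sum hyperplane, so a small perturbation $\bar t+\varepsilon(v-c\bar t)$ stays in $T_*(X)$ without renormalization, and $v$ is recovered as a linear combination of elements of $T_*(X)$. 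This bypasses both the basis construction and the linear-independence verification, which is where the paper's proof is most delicate, and it makes transparent that only two points of $T_*(X)$ beyond $\bar t$ are needed per vector. Your closing observations are also accurate: the hypothesis $X\in\mathbb S_+(p)$ is not used once $H(X)$ is defined as the kernel (it matters elsewhere in the paper, where $H(X)$ must coincide with the zero set of the quadratic form), and the existence of $\bar t>0$ is exactly what guarantees $T_*(X)\neq\emptyset$ so that the vertex set is nonempty.
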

\begin{proof} Note that by construction, $H(X)$ is a subspace, $T_*(X)$ is a convex bounded polyhedron, and  $\tau(j)\geq 0, j \in J.$

 Since $\tau(j) \in H(X),$ $j \in J,$ we have
${\rm span} \{\tau(j), j \in J\}\subset H(X).$
Let us show that $H(X)\subset {\rm span} \{\tau(j), j \in J\}.$

Let $t(i), i \in I_b,$ be a basis of the subspace $H(X)$. Then
$$\bar t=\sum\limits_{i\in I_b}\alpha_it(i), \; \sum\limits_{i\in I_b}|\alpha_i|>0.$$
It is evident that for sufficiently large $\theta >$ we have
$$\bar t(i):=t(i)+\theta \bar t >0,\ i \in I_b.$$
Consider an index $i_0\in I_b$ such that $\alpha_{i_0}\not=0$ and show that the vectors
$\bar t(i), \in I_b\setminus i_0,\; \bar t,$ are linearly independent.

Suppose the contrary:  there exist numbers $\beta_i,  i\in I_b\setminus i_0,$ $\beta_0$ such that
$\sum\limits_{i\in I_b\setminus i_0}\beta_i\bar t(i)+\beta_0\bar t=0,$ $ \sum\limits_{i\in I_b\setminus i_0}|\beta_i|+|\beta_0|>0.$ This implies that
 $$\sum\limits_{i\in I_b\setminus i_0}\beta_i t(i)+\bar \beta_0\bar t=0\mbox{ where } \bar \beta_0= \beta_0+\theta\sum\limits_{i\in I_b\setminus i_0}\beta_i.$$

 The following cases are possible:  A) $\bar \beta_0\not =0$,
\hspace{3mm}  B) $\bar \beta_0=0$, $ \sum\limits_{i\in I_b\setminus i_0}|\beta_i|\not =0.$

 Consider case A). Then
 $\bar t =\sum\limits_{i\in I_b\setminus i_0}\bar \beta_it(i)$ where $\bar \beta_i=-\beta_i/\bar \beta_0,\; i\in I_b\setminus i_0.$
 Hence
 $$\sum\limits_{i\in I_b}\alpha_it(i)=\sum\limits_{i\in I_b\setminus i_0}\bar \beta_it(i)\ \Longrightarrow\
 \alpha_{i_0}t(i_0)=\sum\limits_{i\in I_b\setminus i_0}(\bar \beta_i-\alpha_i)t(i)\ \mbox { where }\ \alpha_{i_0}\not=0.$$
 But this contradicts the assumption that the vectors
 $t(i), \ i \in I_b,$
 are linearly independent.

  In  case B), we have
$\sum\limits_{i\in I_b\setminus i_0} \beta_it(i)=0,\ \sum\limits_{i\in I_b\setminus i_0} |\beta_i|\not=0.$
But this contradicts  the assumption that the vectors  $t(i), \ i \in I_b,$
 are linearly independent.

Thus we have proved  that the vectors
$\bar t(i)\in H(X), i\in I_b\setminus i_0,\; \bar t $ are linearly independent.
 Let us set
 $$\hat t(i):=\bar t(i)/||\bar t(i)||_1, \; {i\in I_b\setminus i_0}; \ \hat t(i_0)=\bar t /||\bar t||_1.$$
By construction,   the  vectors $\hat t(i), i \in I_b,$ form a basis of the subspace $H(X)$ and satisfy the conditions:
 $\hat t(i)\in T_a(X),\; i \in I_b.$
 Notice that it follows from the latter inclusions that $\hat t(i)=\sum\limits_{j \in J}\alpha_{ij}\tau(j),$
 $\sum\limits_{j \in J}\alpha_{ij}=1,$ $\alpha_{ij}\geq 0,$ $j \in J.$
Then for  any $t \in H(X)$, we have $$t=\sum\limits_{i \in I_b}\beta_i\hat t(i)=
\sum\limits_{i \in I_b}\beta_i\sum\limits_{j \in J}\alpha_{ij}\tau(j)=\sum\limits_{j \in J}\bar \alpha_{j}\tau(j)
\mbox{ where } \bar \alpha_j=\sum\limits_{i \in I_b}\beta_i\alpha_{ij},\ j \in J.$$
  This implies that $t\! \in \!{\rm span}\{\tau(j), j \!\in\! J\}$ and consequently $ H(X)\!\subset {\rm span}\{\tau(j), j \!\in\! J\}$.
	\end{proof}

\begin{proposition}\label{PPA2} Let $X\in {\mathbb S}_+(p)$  and $Z\in {\mathbb S}(p)$ be such that
$ XZ=\mathbb O_{p\times p}$. Let $\tau(j), j \in J_b,$ be a basis of the subspace
$H(X):=\{t \in \mathbb R^m:Xt=0\}.$ Then the matrix $Z$ can be presented in the form
\be Z=\sum\limits_{(i,j)\in V(J_b)}\beta_{ij}(\tau(i)+\tau(j))(\tau(i)+\tau(j))^\top, \ \beta_{ij}\in \mathbb R \ \forall (i,j)\in V(J_b).\label{400}\ee
\end{proposition}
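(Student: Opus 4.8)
The plan is to diagonalize $X$ by an orthogonal transformation, use $XZ=\mathbb O_{p\times p}$ to force the range of $Z$ (as a matrix, thinking of $Z$ acting on $\mathbb R^p$) to lie inside $H(X)$, and then recognize that every symmetric matrix whose range lies in a given subspace $H$ can be written as a symmetric bilinear expression in any spanning set of $H$. More precisely, since $X\in\mathbb S_+(p)$, write $X=Q\,\mathrm{diag}(\mu_k,k=1,\dots,p)\,Q^\top$ with $Q^\top Q=E(p)$ and $\mu_k\ge 0$; let $P_0=\{k:\mu_k=0\}$, so that $H(X)=\mathrm{span}\{Qe_k:k\in P_0\}$ and $\dim H(X)=|P_0|=:d$. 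From $XZ=\mathbb O_{p\times p}$ we get $\mathrm{diag}(\mu_k)\,(Q^\top Z Q)=\mathbb O_{p\times p}$, hence every row of $\bar Z:=Q^\top Z Q$ indexed by $k\notin P_0$ vanishes; since $\bar Z$ is symmetric, the corresponding columns vanish too, so $\bar Z$ is supported on the $P_0\times P_0$ block, i.e. $\bar Z=\sum_{k,q\in P_0}\bar Z_{kq}\,e_ke_q^\top$. Translating back, $Z=\sum_{k,q\in P_0}\bar Z_{kq}\,(Qe_k)(Qe_q)^\top$, which shows $Z\in\mathrm{span}\{uv^\top+vu^\top:u,v\in H(X)\}$.

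Next I would convert this into the stated form using the basis $\tau(j),j\in J_b$. Since $\{\tau(j):j\in J_b\}$ is a basis of $H(X)$, the set of rank-one symmetric matrices $\{\tau(i)\tau(j)^\top+\tau(j)\tau(i)^\top:(i,j)\in V(J_b)\}$ spans the space $\mathbb S(H(X))$ of symmetric matrices acting within $H(X)$ (this is the standard fact that $\mathrm{svec}$-images of $uu^\top$, $u$ ranging over a basis and its pairwise sums, span the symmetric matrices on that subspace; it is exactly the linear-independence/spanning statement invoked after \eqref{rank1}). Therefore $Z$ is a linear combination $Z=\sum_{(i,j)\in V(J_b)}c_{ij}(\tau(i)\tau(j)^\top+\tau(j)\tau(i)^\top)$ for suitable $c_{ij}\in\mathbb R$. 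Finally I would absorb the cross terms into squares: using the polarization identity $(\tau(i)+\tau(j))(\tau(i)+\tau(j))^\top=\tau(i)\tau(i)^\top+\tau(j)\tau(j)^\top+(\tau(i)\tau(j)^\top+\tau(j)\tau(i)^\top)$, a straightforward change of coefficients rewrites any such linear combination in the form \eqref{400} with coefficients $\beta_{ij}\in\mathbb R$ indexed by $(i,j)\in V(J_b)$ (the diagonal terms $(i,i)$ supplying the pure squares $\tau(i)\tau(i)^\top$, and the off-diagonal $\beta_{ij}$ adjusting for the squares they introduce). This gives the claimed representation.

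The only genuinely nontrivial point is the spanning claim: that the matrices $(\tau(i)+\tau(j))(\tau(i)+\tau(j))^\top$, $(i,j)\in V(J_b)$, span $\mathbb S(H(X))$, equivalently that their count $|V(J_b)|=d(d+1)/2$ equals $\dim\mathbb S(H(X))$ and that they are linearly independent. Linear independence of exactly these matrices for a linearly independent family $\{\tau(j)\}$ is precisely the fact cited from \cite{dickinson2011geometry} just before \eqref{rank1}; combined with the dimension count it yields spanning. So in the write-up I would simply invoke that cited result rather than reprove it. The diagonalization step and the polarization bookkeeping are routine; the identification of $\mathrm{range}(Z)\subseteq H(X)$ from $XZ=\mathbb O_{p\times p}$ is the conceptual core and is short once $X$ is diagonalized.
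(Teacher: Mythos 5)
Your proof is correct. It reaches the same two milestones as the paper's proof --- first that $Z$ is a symmetric operator supported entirely on $H(X)$, and then the polarization rewriting in the basis $\tau(j)$, $j\in J_b$ --- but it gets to the first milestone by a different decomposition. The paper diagonalizes $Z$: writing $Z=\sum_k z_k q(k)q(k)^\top$ and multiplying $XZ=\mathbb O_{p\times p}$ on the right by $Q$, it concludes $Xq(k)={\bf 0}$ whenever $z_k\neq 0$, so each eigenvector with nonzero eigenvalue lies in $H(X)$; it then expands each $q(k)q(k)^\top$ explicitly in the $\tau$ basis and collects coefficients. You instead diagonalize $X$, observe that $\mathrm{diag}(\mu_k)\bar Z=\mathbb O_{p\times p}$ kills the rows of $\bar Z=Q^\top ZQ$ outside the kernel block and (by symmetry) the columns too, and then finish with a spanning/dimension argument for $\mathbb S(H(X))$ rather than explicit coefficient bookkeeping. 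Both routes are equally short; yours makes the structural fact $Z=P_{H(X)}ZP_{H(X)}$ more transparent, while the paper's avoids any appeal to a dimension count or to the linear-independence fact (which, as you note, is not actually needed once you polarize, since $\{\tau(i)\tau(j)^\top+\tau(j)\tau(i)^\top\}$ spanning $\mathbb S(H(X))$ is elementary for a basis and the passage to the sum-of-squares form is an invertible triangular change of coefficients). No gaps.
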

\begin{proof}If  $Z\in {\mathbb S}(p)$, then  there exists
 an orthogonal matrix $Q\in \mathbb R^{p\times p}$ with columns $q(k)$, $k=1,...,p,$ such that
\be Z=Q{\rm diag}(z_k,k=1,...,p)Q^\top=\sum\limits_{k=1}^pz_kq(k)q^\top(k)\label{401}\ee
 where $z_k,k=1,...,p,$ are the eigenvalues of $Z.$
Consequently
\bea&\!\!\mathbb O_{p\times p}=XZ=XQ{\rm diag}(z_k,k=1,...,p)Q^\top \, \Longrightarrow\, \mathbb O_{p\times p}=XQ{\rm diag}(z_k,k=1,...,p)\nonumber\\
&\Longrightarrow \ \ Xq(k)=0 \ \forall k\in I:=\{k\in\{1,...,p\}:z_k\not=0\}.\nonumber\eea
Thus, for $ k \in I,$ we obtain
$q(k)\in H(X) $ and consequently $ q(k)=\sum\limits_{j \in J_b}\alpha_j(k)\tau(j).$

This implies that, for $k \in I,$ we have
\bea &q(k)q^\top(k)=(\sum\limits_{j \in J_b}\alpha_j(k)\tau(j))(\sum\limits_{i \in J_b}\alpha_i(k)\tau(j))^\top\nonumber\\
&=\sum\limits_{j \in J_b}\alpha^2_j(k)\tau(j)(\tau(j))^\top +\sum\limits_{(i,j)\in V_*(J_b)}\alpha_i(k)\alpha_j(k)
(\tau(j)(\tau(i))^\top+\tau(i)(\tau(j))^\top)\nonumber\eea
where $V_*(J_b)=V(J_b)\setminus \{(j,j), j \in J_b\}.$
Taking into account that
$$\tau(j)(\tau(i))^\top+\tau(i)(\tau(j))^\top=(\tau(i)+\tau(j))(\tau(i)+\tau(j))^\top-\tau(j)(\tau(j))^\top-\tau(i)(\tau(i))^\top$$
we obtain that
$q(k)q^\top(k)=\sum\limits_{(i,j) \in V(J_b)}\bar \beta_{ij}(k) (\tau(i)+\tau(j))(\tau(i)+\tau(j))^\top $  with some
$\bar \beta_{ij}(k)\in \mathbb R, \ (i,j) \in V(J_b).$
It follows from the latter equality and (\ref{401}) that presentation (\ref{400}) holds true 
 with $\beta_{ij}=\sum\limits_{k\in I}z_k \bar \beta_{ij}(k), \ (i,j) \in V(J_b).$
\end{proof}

\end{document}